\theoremstyle{plain}
\newtheorem{theo}{Theorem}[section]
\newtheorem{lem}[theo]{Lemma}
\newtheorem{prop}[theo]{Proposition}
\theoremstyle{definition}
\theoremstyle{remark}
\newcommand{\E}{\mathbb{E}}
\newcommand{\R}{\mathbb{R}}
\newcommand{\N}{\mathbb{N}}
\title{Adaptive Covariance Estimation with model selection} 
\author{Rolando Biscay, H\'el\`ene Lescornel and Jean-Michel Loubes}
\date{}
\begin{document}
\maketitle

\begin{abstract} 
We provide in this paper a fully adaptive penalized procedure to select a covariance among a collection of models observing i.i.d
replications of the process at fixed observation points. For this we generalize the results of ~\cite{MR2684389} and propose to use a data driven penalty to obtain an oracle inequality for the estimator. We prove that this method is an extension to the matricial regression model of the work by Baraud in~\cite{BAR}.
\end{abstract}
{\bf Keywords}: covariance estimation, model selection, adaptive procedure.
%



%

\section{Introduction}
Estimating the covariance function  of stochastic
processes is a fundamental issue in statistics with many applications, ranging from  geostatistics,
financial series or epidemiology for instance (we refer to \cite{MR1697409}, 
\cite{MR0456314} or \cite{MR1239641} for general references). While parametric methods have been extensively studied in the
statistical literature (see \cite{MR1239641} for a review),
nonparametric procedures have only recently received  attention, see for instance \cite{MR2403106,MR2684389,BIGOT:2010:HAL-00440424:4,Bigot:arXiv1010.1601} and references therein. \\
\indent In~\cite{MR2684389}, a model selection procedure is proposed to  construct a non parametric estimator of the covariance function of a stochastic process under mild assumptions. However their method  heavily relies on a prior knowledge of the variance. In this paper, we extend this procedure and propose a fully data driven penalty which leads to select the best covariance among a collection of models. This result constitutes a generalization to the matricial regression model of the selection methodology provided in~\cite{BAR}.\vskip .1in

Consider a stochastic process $\left( X\left( t \right) \right)_{t \in T}$ taking its values in $\R$ and indexed by $T \subset\R^d$, $d \in \N$. We assume that $\E \left[ X\left( t \right) \right]=0$ $\forall t \in T$ and we aim at estimating its covariance function $ \sigma \left( s, t \right)= \E \left[X \left( s\right) X\left( t \right) \right] < \infty$ for all $ t, s \in T$. We assume we observe  $X_i\left(t_j \right)$ where  $i\in \left\lbrace 1 \dots n \right\rbrace$ and $j \in \left\lbrace 1 \dots p\right\rbrace$. Note that the observation points $t_j$ are fixed and that the $X_i$'s are independent copies of the process $X$. Set  $x_i=\left(X_i\left(t_1\right),\dots , X_i\left( t_p\right) \right) \forall i \in \left\lbrace 1 \dots n \right\rbrace$ and denote by $\Sigma$ the  covariance matrix of $X$ at the observations points ${\Sigma =}\mathbb{E%
}\left( {x}_{i}{x}_{i}^{\top }\right) =\left( \sigma \left(
t_{j},t_{k}\right) \right) _{1\leq j\leq p,1\leq k\leq p}.$ \vskip .1in
Following the methodology presented in \cite{MR2684389}, we approximate the process $X$ by its projection onto some finite dimensional model. For this, consider a countable set of functions $\left(g_{\lambda}\right)_{\lambda \in \Lambda }$ which may be for instance a basis of $L^2 \left( T\right)$ and choose a collection of models $\mathcal{M} \subset \mathcal{P}\left( \Lambda \right)$. For  $m \subset \mathcal{M}$,  a finite number of indices, the process can be approximated by 
$$X\left( t\right)   \approx \sum_{\lambda \in m} a_{ \lambda} g_{\lambda}\left( t\right). $$      Such an approximation leads to an estimator which depends on the collection of functions $m$, denoted by $\hat{\Sigma}_m$. Our objective is to select in a data driven way, the best model, i.e. the one close to an oracle $m_0$ defined as the minimizer of the quadratic risk, namely $$m_0 \in \underset{m\in \mathcal{M}}{{\rm arg}\min}R\left(m\right) = \underset{m\in \mathcal{M}}{{\rm arg}\min} \E\left[ \left\Vert \Sigma - \hat{\Sigma}_m\right\Vert^ 2 \right].$$
This result is achieved using a model selection procedure.\vskip .1in
 The paper  falls into the
following parts. The description of the statistical framework of the matrix
regression is given in Section~\ref{s2}. Section \ref{s:covest} is devoted
to the main statistical results. Namely we recall the results of the estimate given in \cite{MR2684389} and prove an oracle inequality with a fully data driven penalty.  
Section~\ref{stech} states technical results which are used in all the
paper, while the proofs are postponed to the Appendix.

\section{Statistical model and notations} \label{s2}
We consider an $\R$-valued process $X\left(t \right)$ indexed by $T$ a subset of $\R^d$ with expectation equal to 0. We are interested in its covariance function denoted by $\sigma\left(s,t \right) = \E \left[ X\left(s\right) X\left( t \right) \right]$.

We have at hand the observations $x_i= \left( X_i \left( t_1 \right), \dots , X_i \left( t_p \right) \right)$ for $1 \leqslant i \leqslant n$ where  $X_i$ are independent copies of the process and $t_j$ are deterministic points.
We note $\Sigma \in \R^{p \times p}$ the covariance matrix of the vector $x_i$.

Hence we observe
\begin{equation}
\label{modmatr}
x_i x_i^ \top = \Sigma + U_i, \quad 1 \leqslant i \leqslant n
\end{equation}
where $U_i$ are i.i.d. error matrices with expectation 0. 
We denote by $S$ the empirical covariance of the sample : $S =\frac{1}{n} \sum_{i=1}^n x_i x_i^ \top$.

We use the Frobenius norm $\left\Vert \quad \right\Vert$ defined by
$\left\Vert A \right\Vert^ 2 = \mathrm{Tr}\left( A A^ \top\right)$ for all matrix $A$.
Recall that for a given matrix $A \in \mathbb{R}^ {p \times q}$, $vec(A)$ is the vector in $\in \mathbb{R}^ {pq}$ obtained by stacking the columns of $A$ on top of one another.
We denote by $A^-$ the reflexive generalized inverse of the matrix $A$, see for instance in~\cite{MR2365265} or \cite{engl96}.

The idea is to consider that we have a quite good approximation of the process in the following form 
\begin{equation}
X\left( t\right)   \approx \sum_{\lambda \in m} a_{ \lambda} g_{\lambda}\left( t\right), 
\end{equation}
where $m$ is a finite subset of a countable set $\Lambda$ , $\left( a_\lambda\right)  _{\lambda \in \Lambda}$ are random coefficients in $\R$ and $\left( g_\lambda\right) _{\lambda \in \Lambda}$ are real valued functions. We will consider models $m$ among  a finite collection denoted by $\mathcal{M}$ .\\

We note $G_m \in \mathbb{R}^{p \times \left\vert m \right\vert}$ where $\left( G_m\right)  _{j\lambda}= g_\lambda \left( t_j\right)  $ and $a_m$ the random vector of $\mathbb{R}^{|m|}$ with coefficients $\left( a_\lambda\right)  _{\lambda \in m}$.

Hence, we obtain the following approximations : 
$$x=\left( X\left( t_1\right)  ,..,X\left( t_p\right)  \right)  ^\top \approx G_{m}a_m$$
$$xx^\top \approx G_{m} a_m a_m^\top G_m^\top$$
$$\Sigma \approx G_{m}\mathbb{E}\left[    a_ma_m^\top\right]  G_m^\top$$

Thus, this point of view leads us to approximate $\Sigma$ by a matrix in the subset
\begin{equation}
\mathcal{S}\left( G_m \right)=\left\lbrace G_{m} \Psi G_m^\top / \Psi \text{ symmetric in } \mathbb{R}^{|m|\times |m|}\right\rbrace \subset \mathbb{R}^{p\times p}.
\end{equation}

Hence, for a model $m$, a natural estimator for  $\Sigma$ is given by the projection of $S$ onto $\mathcal{S}\left( G_m \right)$. We can prove using standard algebra (see in \cite{MR2684389} for a general proof) that it has the following form :
\begin{equation}
\widehat{\Sigma}_m= \Pi_m S \Pi_m \quad m \in \mathcal{M} \quad \in \R^{p\times p},
\end{equation}  
where 
\begin{align}
{\Pi }_{m}& ={G}_{m}\left( {G}_{m}^{\top }{G}%
_{m}\right) ^{-}{G}_{m}^{\top } \quad \in \R^{p \times p} \label{Pi-m}
\end{align}%
 are orthogonal projection matrices.
 Set
\begin{align}
D_{m}& =Tr\left( {\Pi }_{m}\otimes {\Pi }_{m}\right)   \notag
\end{align}%

which is the dimension of $\mathcal{S}\left(G_m\right)$ assumed to be positive, and $\Sigma_m=\Pi_m \Sigma \Pi_m$  the projection of $\Sigma$ onto this subspace.

Hence we obtain the model selection procedure defined in \cite{MR2684389}. The estimation error for a model $m\in \mathcal{M}$ is given by 
\begin{equation}
\mathbb{E}\left( \left\Vert {\Sigma }-\widehat{{\Sigma }}%
_{m}\right\Vert ^{2}\right) =\left\Vert {\Sigma -\Pi }_{m}{%
\Sigma \Pi }_{m}\right\Vert ^{2}+\frac{\delta _{m}^{2}D_{m}}{n},
\label{simple}
\end{equation}%
where%
\begin{align*}
\delta _{m}^{2}& =\frac{\mathrm{Tr}\left( \left( {\Pi }_{m}\otimes 
{\Pi }_{m}\right) {\Phi }\right) }{D_{m}}, \\
{\Phi }& {=}V\left( vec\left( {x}_{1}{x}%
_{1}^{\top}\right) \right) .
\end{align*}%

Given $\theta >0$, it is thus natural to define the penalized covariance estimator $\widehat{%
{\Sigma }}=\widehat{{\Sigma }}_{\widehat{m}}$ by 
\begin{equation*}
\widehat{m}=\arg \underset{m\in \mathcal{M}}{\min }\left\{ \frac{1}{n}%
\sum_{i=1}^{n}\left\Vert {x}_{i}{x}_{i}^{\top }-\widehat{%
{\Sigma }}_{m}\right\Vert ^{2}+pen\left( m\right) \right\} ,
\end{equation*}%
where 
\begin{equation}
pen\left( m\right) =\left( 1+\theta \right) \frac{\delta _{m}^{2}D_{m}}{n}.
\label{Penal}
\end{equation}
The following result proved in \cite{MR2684389} states an oracle inequality for the estimator $\widehat{\Sigma}$.

\begin{theo} \label{TeorConcentIneqLSEst} Let $q>0$ be given such
that there exists $%
\kappa >2\left( 1+q\right) $ satisfying $%
\mathbb{E}%
\left\Vert {x}_{1}%
{x}_{1}^{\top}\right\Vert ^{\kappa}<\infty $.
Then, for some constants $%
K\left( \theta \right) >1$ and $%
C^{\prime }\left( \theta ,\kappa,q\right) >0 $ we have that
\begin{equation*}%
\left( \mathbb{E}\left\Vert {\Sigma
}-\widehat{{\Sigma }} \right\Vert ^{2q}\right) ^{1/q}\leq
2^{\left( q^{-1}-1\right) _{+}} \left[ K\left( \theta \right)
\inf_{m\in \mathcal{M}}\left( \left\Vert
{%
\Sigma -\Pi }_{m}{\Sigma \Pi }_{m}\right\Vert ^{2}+%
\frac{%
\delta _{m}^{2}D_{m}}{n}\right) +\frac{\Delta _{\kappa}}{n}\delta _{\sup
}^{2} \right] ,
\end{equation*}%
where
\begin{equation*}%
\Delta
_{\kappa}^{q}=C^{\prime }\left( \theta ,\kappa,q\right) \mathbb{E}%
\left\Vert
{x%
}_{1}{x}_{1}^{\top}\right\Vert ^{\kappa}\left(
\sum\limits_{m\in
\mathcal{%
M}}\delta _{m}^{-\kappa}D_{m}^{-\left(
\kappa/2-1-q\right) }\right)
\end{equation*}%
and
\begin{equation*}
\delta
_{\sup }^{2}=\max \left\{ \delta
_{m}^{2}:m\in \mathcal{M}\right\} .
\end{%
equation*}%
\end{theo}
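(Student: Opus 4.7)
The plan is to adapt the classical Birg\'e--Massart penalized least-squares argument to the matrix regression model \eqref{modmatr}. Throughout, write $\gamma_n(A):=\frac{1}{n}\sum_{i=1}^{n}\|x_ix_i^{\top}-A\|^{2}$ and $W_m:=\Pi_m(S-\Sigma)\Pi_m$, so that $\widehat{\Sigma}_m=\Sigma_m+W_m$ and, by Pythagoras in $\mathcal{S}(G_m)$, $\|\widehat{\Sigma}_m-\Sigma\|^{2}=\|\Sigma-\Sigma_m\|^{2}+\|W_m\|^{2}$. I would start from the defining inequality $\gamma_n(\widehat{\Sigma}_{\widehat{m}})+pen(\widehat{m})\leq\gamma_n(\widehat{\Sigma}_m)+pen(m)$ for an arbitrary $m\in\mathcal{M}$, use the algebraic identity
\[
\gamma_n(A)-\gamma_n(B)=\|A-\Sigma\|^{2}-\|B-\Sigma\|^{2}-2\langle S-\Sigma,\,A-B\rangle
\]
(with $\langle\cdot,\cdot\rangle$ the Frobenius inner product), and split the stochastic cross term through the identity $\langle S-\Sigma,\widehat{\Sigma}_m\rangle=\|W_m\|^{2}+\langle S-\Sigma,\Sigma_m\rangle$, itself an immediate consequence of the symmetry and idempotence of $\Pi_m$.

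After this rearrangement, the decomposition $\|W_m\|^{2}=\delta_m^{2}D_m/n+(\|W_m\|^{2}-\delta_m^{2}D_m/n)$ isolates the deterministic variance $\mathbb{E}\|W_m\|^{2}=\delta_m^{2}D_m/n$. Since $pen(m)=(1+\theta)\delta_m^{2}D_m/n$ precisely compensates this mean with a surplus $\theta\delta_m^{2}D_m/n$, a Young inequality $2ab\le\eta a^{2}+\eta^{-1}b^{2}$ applied to the residual linear term $2\langle S-\Sigma,\Sigma_{\widehat{m}}-\Sigma_m\rangle$ and to the centered fluctuation $\|W_m\|^{2}-\delta_m^{2}D_m/n$ yields, with appropriate choice of $\eta$ depending on $\theta$, a deterministic inequality of the schematic form
\[
\|\widehat{\Sigma}_{\widehat{m}}-\Sigma\|^{2}\leq K(\theta)\left(\|\Sigma-\Sigma_m\|^{2}+\frac{\delta_m^{2}D_m}{n}\right)+R_{\widehat{m}}+R_m,
\]
where each $R_{m'}$ collects a centered quadratic piece $(\|W_{m'}\|^{2}-\delta_{m'}^{2}D_{m'}/n)_+$ and a centered linear piece.

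The probabilistic heart of the proof is then to bound $\mathbb{E}[\sup_{m'\in\mathcal{M}}R_{m'}^{q}]$ by $\Delta_\kappa^q\delta_{\sup}^{2q}/n^q$. Since $x_ix_i^{\top}-\Sigma$ is i.i.d., centered, with $\mathbb{E}\|x_1x_1^{\top}\|^{\kappa}<\infty$, the quantity $\|W_{m'}\|^{2}-\delta_{m'}^{2}D_{m'}/n$ is a centered degenerate $U$-statistic of order $2$ in the vectors $\mathrm{vec}(x_ix_i^{\top}-\Sigma)$ projected through $\Pi_{m'}\otimes\Pi_{m'}$; a Rosenthal-type moment inequality of order $\kappa/2$ applied model-by-model and then combined with a union bound over the finite collection $\mathcal{M}$ produces an $L^q$ estimate whose dependence on $D_{m'}$ and $\delta_{m'}$ is precisely the weight $\delta_{m'}^{-\kappa}D_{m'}^{-(\kappa/2-1-q)}$ appearing in $\Delta_\kappa$. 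The hypothesis $\kappa>2(1+q)$ is exactly what makes the Rosenthal step applicable at order $q$ \emph{and} what renders the resulting exponents of $D_{m'}$ negative, so that the series defining $\Delta_\kappa$ is summable. The residual linear terms are controlled by the same Rosenthal argument after a Cauchy--Schwarz splitting, and produce a bound of the same or smaller order. Combining these pieces and using $(a+b)^{q}\leq 2^{(q^{-1}-1)_{+}}(a^{q}+b^{q})$ to pass to the stated $q$-th root form produces the announced oracle inequality, with the constants $K(\theta)$ and $C'(\theta,\kappa,q)$ inherited from the explicit constants in Young's and Rosenthal's inequalities.

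The main obstacle is the Rosenthal-type moment bound for the centered quadratic form $\|W_{m'}\|^{2}-\delta_{m'}^{2}D_{m'}/n$ with the \emph{exact} dependence on $D_{m'}$ and $\delta_{m'}$ demanded by the formula for $\Delta_\kappa$; everything else in the argument (rearrangement of the contrast, Young's inequality, weighted union bound over $\mathcal{M}$, the final convexity inequality) is careful bookkeeping, whereas this concentration lemma for a degenerate $U$-statistic is the genuine technical core of the theorem.
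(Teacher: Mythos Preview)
Your proposal is correct and follows essentially the same Birg\'e--Massart/Baraud route as the paper: the paper vectorizes the model (setting $y_i=\mathrm{vec}(x_ix_i^\top)$, $f_i=\mathrm{vec}(\Sigma)$, $P_m=I_n\otimes(\Pi_m\otimes\Pi_m)$) so that Theorem~\ref{TeorConcentIneqLSEst} reduces to Proposition~\ref{ExtTh3.1Baraud}, and then runs exactly the contrast decomposition and Young-inequality balancing you outline (this is visible in the proof of Lemma~\ref{lem}, which reproduces that argument). The concentration step you describe as a Rosenthal-type moment bound on the centered quadratic form is precisely Proposition~\ref{ExtCor5.1Baraud}, stated there as a tail inequality for $\varepsilon^{\top}\widetilde{A}\varepsilon$ and then integrated against $qx^{q-1}\,dx$ after a union bound over $m'\in\mathcal{M}$ to produce the weights $\delta_{m'}^{-\kappa}D_{m'}^{-(\kappa/2-1-q)}$ in $\Delta_\kappa$.
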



However the penalty defined here depends on the quantity $\delta_m$ which is unknown in practice since it relies on the matrix $\Phi =V\left( vec\left( x x^\top \right) \right)$.
Our objective is to study a covariance estimator built with a new penalty involving an estimator of $\Phi$.

More precisely, we will replace $pen(m)$ by an empirical version $\widehat{pen}(m)$, where

\begin{equation}
\widehat{pen}\left( m\right) =\left(1+\theta \right) \frac{\widehat{\delta} _{m}^{2}D_{m}}{n},
\label{Penalbis}
\end{equation}
and 
\begin{align*}
\widehat{\delta} _{m}^{2}& =\frac{\mathrm{Tr}\left( \left( {\Pi }_{m}\otimes 
{\Pi }_{m}\right) {\widehat{\Phi} }\right) }{D_{m}}, \\
\end{align*}%

with $\widehat{\Phi}$ an estimator of $\Phi$.

The objective is to generalize Theorem \ref{TeorConcentIneqLSEst} and to construct a fully adaptive penalized procedure to estimate the covariance function.

\section{Main result : adaptive penalized covariance estimation} \label{s:covest}
Here we state the oracle inequality obtained for the new covariance estimator introduced previously.

Set
$$y_i = vec \left( x_i x_i^\top \right), 1 \leqslant i \leqslant n,$$
which are vectors in $\R^{p^2}$ and denote  by $S_{vec} = \frac{1}{n}\sum_{i=1}^n y_i$ their empirical mean.
 Consider the following constant $C_{inf}=\inf_{m \in \mathcal{M}}\mathrm{Tr}\left( \left(\Pi_m \otimes \Pi_m \right)\Phi \right)$, and assume that the collection of models is chosen such that $C_{ inf} >0$. Set

\begin{align*}
\widehat{\Phi} & = \frac{1}{n} \sum_{i=1}^n \left( y_iy_i^\top -S_{vec}S_{vec}^\top \right),\\
\widehat{\delta} _{m}^{2}& =\frac{\mathrm{Tr}\left( \left( {\Pi }_{m}\otimes 
{\Pi }_{m}\right) {\widehat{\Phi} }\right) }{D_{m}}. \\
\end{align*}%
Given $\theta >0$, we consider the covariance estimator $\widetilde{%
{\Sigma }}=\widehat{{\Sigma }}_{\widetilde{m}}$ with 
\begin{equation*}
\widetilde{m}=\arg \underset{m\in \mathcal{M}}{\min }\left\{ \frac{1}{n}%
\sum_{i=1}^{n}\left\Vert {x}_{i}{x}_{i}^{\top }-\widehat{%
{\Sigma }}_{m}\right\Vert ^{2}+\widehat{pen}\left( m\right) \right\} ,
\end{equation*}%
where 
\begin{equation}
\widehat{pen}\left( m\right) =\left(1+\theta \right) \frac{\widehat{\delta} _{m}^{2}D_{m}}{n}.
\end{equation}

\begin{theo}

\label{theochili}

Let $1 \geqslant q>0$ be given such
that there exists $\beta > \max \left( 2\left( 1+2q\right), 3+2q\right)$ 
 satisfying $%
\mathbb{E}%
\left\Vert xx^\top\right\Vert ^{\beta}<\infty $.

Then, for a constant C depending on $\theta, \beta$ and $q$, we have for  $n\geqslant n(\beta,\theta,C_{inf},\Sigma)$, and $\forall \kappa \in  \left] 2\left( 1+2q\right) ; \min\left( \beta, 2 \beta -4\right) \right[$ : 

\begin{align}
\left( \E \left[ \left\Vert \Sigma - \widetilde{\Sigma} \right\Vert^{2q} \right] \right) ^{1/q} & \leqslant   C  \inf_{m\in \mathcal{M}}\left(\left\Vert
\Sigma -\Sigma_m\right\Vert ^{2}+%
\frac{\delta _{m}^{2}D_{m}}{n}\right) 
\\ + & \frac{C}{n}\left[ \widetilde{\Delta}_{\beta} \left[ \E \left[ 
\left\Vert xx^\top \right\Vert^\beta \right]^\frac{2}{\beta} + \left\Vert \Sigma \right\Vert^2 \right] + \delta^2_{sup} \Delta_\kappa \right]
\end{align}
where $$\widetilde{\Delta}^q_{\beta}=c\left(\theta, \beta, q\right)\left( \E \left[ \left\Vert x x^\top  \right\Vert^\beta \right]\sum_{m\in \mathcal{M}} \delta_m^{-\beta} D_m^{-\beta/2}\right)^ {1-\frac{2q}{\kappa}}$$ 

$$\Delta
_{\kappa}^{q}=C\left( \theta ,\kappa,q\right) \mathbb{E}%
\left\Vert
{x%
}{x}^{\top}\right\Vert ^{\kappa}\left(
\sum\limits_{m\in
\mathcal{%
M}}\delta _{m}^{-\kappa}D_{m}^{-\left(
\kappa/2-1-q\right) }\right)$$

and

$$\delta
_{\sup }^{2}=\max \left\{ \delta
_{m}^{2}:m\in \mathcal{M}\right\}.$$

\end{theo}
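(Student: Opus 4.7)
The plan is to reduce Theorem \ref{theochili} to Theorem \ref{TeorConcentIneqLSEst} by showing that, with overwhelming probability, the data-driven penalty $\widehat{pen}(m)$ is multiplicatively close to the deterministic penalty $pen(m)$, and by paying a small price on the complementary "bad" event via the moment assumption $\E\|xx^\top\|^\beta < \infty$.

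First I introduce the concentration event
\[
\Omega_n = \Bigl\{ \forall m \in \mathcal{M},\ \bigl|\widehat{\delta}_m^2 - \delta_m^2\bigr| \leq \eta\, \delta_m^2 \Bigr\}
\]
for some $\eta = \eta(\theta) \in (0,1)$ small enough that on $\Omega_n$ we have, for well-chosen $\theta' < \theta < \theta''$,
\[
(1+\theta')\,\frac{\delta_m^2 D_m}{n} \leq \widehat{pen}(m) \leq (1+\theta'')\,\frac{\delta_m^2 D_m}{n}.
\]
On $\Omega_n$, the criterion defining $\widetilde{m}$ is sandwiched between the deterministic criteria that define selectors of the form considered in Theorem \ref{TeorConcentIneqLSEst}, so essentially the same argument yields the oracle bound with constant $K(\theta')$ and residual term $\delta_{\sup}^2\Delta_\kappa/n$ (for any admissible $\kappa$ in the stated range). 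This accounts for the first summand of the theorem and for the $\delta_{\sup}^2 \Delta_\kappa/n$ piece.

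To handle $\Omega_n^c$, I would split the moment
\[
\E\bigl[\|\Sigma - \widetilde{\Sigma}\|^{2q}\bigr] = \E\bigl[\|\Sigma - \widetilde{\Sigma}\|^{2q}\mathbf{1}_{\Omega_n}\bigr] + \E\bigl[\|\Sigma - \widetilde{\Sigma}\|^{2q}\mathbf{1}_{\Omega_n^c}\bigr]
\]
and treat the second term by Hölder: since $\widetilde{\Sigma} = \Pi_{\widetilde m} S \Pi_{\widetilde m}$ we have $\|\Sigma-\widetilde{\Sigma}\| \leq \|\Sigma\| + \|S\|$, whose $L^\beta$ norm is controlled by $\E\|xx^\top\|^\beta$ and $\|\Sigma\|^2$. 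The probability $\pr(\Omega_n^c)$ is estimated by a union bound: for each $m$, writing
\[
\widehat{\delta}_m^2 - \delta_m^2 = \frac{1}{D_m} \mathrm{Tr}\!\left((\Pi_m \otimes \Pi_m)(\widehat{\Phi}-\Phi)\right)
\]
and decomposing $\widehat{\Phi}-\Phi$ into a centered sum $n^{-1}\sum_i(y_iy_i^\top - \E y_1 y_1^\top)$ plus the quadratic correction $S_{vec}S_{vec}^\top - \E[y_1]\E[y_1]^\top$, a Rosenthal / Markov inequality of order $\beta$ gives a tail of order $n^{-\beta/2}\,\delta_m^{-\beta}\,D_m^{-\beta/2}\E\|xx^\top\|^\beta$. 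Summing over $m$ produces the factor $\sum_m \delta_m^{-\beta} D_m^{-\beta/2}$, and combining with the Hölder split produces the exponent $1-2q/\kappa$ in the definition of $\widetilde{\Delta}_\beta$. The threshold $n \geq n(\beta,\theta,C_{\inf},\Sigma)$ is exactly what is needed to make $\pr(\Omega_n^c)$ small enough for these Hölder manipulations to work; note that the hypothesis $C_{\inf} > 0$ keeps $\delta_m$ bounded away from zero, so that the relative deviations are meaningful.

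The main obstacle is obtaining the uniform concentration of the linear functionals $\Psi \mapsto \mathrm{Tr}((\Pi_m\otimes\Pi_m)\Psi)/D_m$ of $\widehat{\Phi}-\Phi$ with a deviation scale proportional to $\delta_m^2$, so that the resulting bound fits the form $\widetilde{\Delta}_\beta$ and is summable across $\mathcal{M}$. A naive sup over $m$ loses too much; instead the proof relies on moment inequalities of order $\beta$ together with a careful rescaling by $\delta_m^2$. Technical care is also needed in the decomposition of $\widehat{\Phi}-\Phi$ because of the $S_{vec}S_{vec}^\top$ term, which contributes the $\|\Sigma\|^2$ summand in the final bound. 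Once these uniform deviation estimates are in hand, combining the on-event oracle inequality (via Theorem \ref{TeorConcentIneqLSEst}) with the off-event Hölder residual yields the statement of Theorem \ref{theochili}.
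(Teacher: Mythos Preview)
Your approach is essentially the paper's: split on a concentration event for $\widehat\delta_m^2$, rerun the oracle comparison on the good event, and pay for the bad event by H\"older plus a union bound on the deviations of the individual $\widehat\delta_m^2$, obtained from the decomposition of $\widehat\Phi-\Phi$ into a centered i.i.d.\ sum and the quadratic correction $\mu\mu^\top-S_{vec}S_{vec}^\top$. A few points need adjustment.

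First, the paper uses the \emph{one-sided} event $\Omega=\bigcap_m\{\widehat\delta_m^2\geq(1-\alpha)\delta_m^2\}$ only; the upper control is not pointwise but in expectation, via $\E[\widehat\delta_m^2]=\tfrac{n-1}{n}\delta_m^2\leq\delta_m^2$ (this is assumption~A1 of Proposition~\ref{casgen}), and enters only when taking expectation of $\widehat\delta_{m_0}^2$ at a fixed $m_0$ after Lemma~\ref{lem}. On $\Omega$ the paper does not invoke Theorem~\ref{TeorConcentIneqLSEst} as a black box but reruns the comparison argument (Lemma~\ref{lem}), using only $\widehat{pen}(m)\geq(1+2\eta)\delta_m^2D_m/n$ to lower-bound the penalty at $\widetilde m$.

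Second, your tail rate $n^{-\beta/2}$ is too optimistic: the summands $y_iy_i^\top$ have only $\beta/2$ moments under $\E\|xx^\top\|^\beta<\infty$, so Rosenthal (or von~Bahr--Esseen when $\beta\leq 4$) at exponent $\beta/2$ yields $n^{-\min(\beta/4,\,\beta/2-1)}$; the paper ends with $n^{-\kappa/4}$ in Proposition~\ref{lemchili}. The constraints $\beta>\max(2(1+2q),3+2q)$ and $\kappa<\min(\beta,2\beta-4)$ are tuned precisely so that this weaker rate still satisfies $\gamma\geq q/(1-2q/\kappa)$.

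Third, the quadratic correction $\mu\mu^\top-S_{vec}S_{vec}^\top$ is not treated by Markov/Rosenthal but by rewriting $\mathrm{Tr}\bigl((\Pi_m\otimes\Pi_m)(\mu\mu^\top-S_{vec}S_{vec}^\top)\bigr)=\|(\Pi_m\otimes\Pi_m)\mu\|^2-\|(\Pi_m\otimes\Pi_m)S_{vec}\|^2$ and applying the concentration Proposition~\ref{proBaraud} to $\|(\Pi_m\otimes\Pi_m)(S_{vec}-\mu)\|$; this is where $C_{\inf}>0$ and the threshold $n\geq n(\beta,\theta,C_{\inf},\Sigma)$ genuinely enter. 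The $\|\Sigma\|^2$ summand in the final bound comes from your off-event estimate $\|f-\widetilde f\|_n^2\leq\|f\|_n^2+\|\varepsilon\|_n^2$, not from this cross term.
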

We have obtained in Theorem~\ref{theochili} an oracle
inequality since the estimator  $\widetilde{{\Sigma }}$ has the same quadratic
risk as the \textquotedblleft oracle\textquotedblright\ estimator except for
an additive term of order $O\left( \frac{1}{n}\right) $ and a constant
factor. Hence, the selection procedure is optimal in the sense that it
behaves as if the true model were at hand.

The proof of this theorem is divided into two parts. First, as in the of Theorem \ref{TeorConcentIneqLSEst} proved in \cite{MR2684389}, we will consider a vectorized version of the model \eqref{modmatr}. In this technical part we  will obtain an oracle inequality under some particular assumptions for a general penalty. In a second part, we will prove that our particular penalty verifies these assumptions by using properties of the estimator $\widehat{\Phi}$.

\section{Technical results} \label{stech}
\subsection{Vectorized model}
Here we consider the vectorized version of model \eqref{modmatr}. 
In this case, we observe the following vectors in $\R^{p^2}$ : 
\begin{equation}
\label{modvect}
y_i = f_i +\varepsilon_i \quad 1 \leqslant i \leqslant n.
\end{equation}
 Here $y_i$ corresponds  to $vec\left(x_ix_i^\top \right)$ in the model \eqref{modmatr}, $f_i$ to $vect\left(\Sigma \right)$ and  $\varepsilon_i$ to $vec\left(U_i\right)$.
We set $f=\left( f_1^\top , \dots, f_n^\top \right)^\top ,y=\left( y_1^\top , \dots, y_n^\top \right)^\top$ and $\varepsilon=\left( \varepsilon_1^\top , \dots, \varepsilon_n^\top \right)^\top$, which are vectors in $\R^{np^2}$.

We estimate $f$ by  an estimator of the form $$\widehat{f}_m = P_m y \quad m \in \mathcal{M},$$ where $P_m$ is the orthogonal projection onto a subspace $\mathcal{S}_m$ of dimension $D_m$. We note $f_m=P_mf$
and we consider the empirical norm $\left\Vert f \right\Vert^ 2_n =\frac{1}{n} \sum_{i=1}^n f_i^\top f_i$ with the corresponding scalar product $\left\langle \cdot, \cdot \right\rangle_n$.

First we state the vectorized form of Theorem \ref{TeorConcentIneqLSEst}.
Write 
\begin{align*}
\delta _{m}^{2}& =\frac{\mathrm{Tr}\left( {P}_{m}\left( {I}%
_{n}\otimes{\Phi }\right) \right) }{D_{m}}, \\
\delta _{\sup }^{2}& =\max \left\{ \delta _{m}^{2}:m\in \mathcal{M}\right\} .
\end{align*}%
Given $\theta >0$, define the penalized estimator $\widehat{{f}}=%
\widehat{{f}}_{\widehat{m}}$ , where%
\begin{equation*}
\widehat{m}=\arg \underset{m\in \mathcal{M}}{\min }\left\{ \left\Vert 
{y-}\widehat{{f}}_{m}\right\Vert _{n}^{2}+pen\left( m\right)
\right\} ,
\end{equation*}%
with%
\begin{equation*}
pen\left( m\right) =\left( 1+\theta \right) \frac{\delta _{m}^{2}D_{m}}{n}.
\end{equation*}

Then, the proof of Theorem \ref{TeorConcentIneqLSEst} relies on the following proposition proved in \cite{MR2684389}:
\begin{prop}
\label{ExtTh3.1Baraud}: Let $q>0$ be given such that there exists $\kappa>2\left(
1+q\right) $ satisfying $\mathbb{E}\left\Vert \mathbf{\varepsilon }%
_{1}\right\Vert ^{\kappa}<\infty $. Then, for some constants $K\left( \theta
\right) >1$ and $C\left( \theta ,\kappa,q\right) >0$ we have that%
\begin{equation}
\left( \mathbb{E}\left\Vert {f}-\widehat{{f}}\right\Vert
_{n}^{2q}\right) ^{1/q}\leq 2^{\left( q^{-1}-1\right) _{+}}\left[ K\left(
\theta \right) \inf_{m\in \mathcal{M}}\left( \left\Vert {f-P}_{m}%
{f}\right\Vert _{n}^{2}+\frac{\delta _{m}^{2}D_{m}}{n}\right) +\frac{%
\Delta _{\kappa}}{n}\delta _{\sup }^{2}\right] ,
\end{equation}%
where%
\begin{align*}
\Delta _{\kappa}^{q}& =C\left( \theta ,\kappa,q\right) \mathbb{E}\left\Vert \mathbf{%
\varepsilon }_{1}\right\Vert ^{\kappa}\left( \sum\limits_{m\in \mathcal{M}}\delta
_{m}^{-\kappa}D_{m}^{-\left( \kappa/2-1-q\right) }\right) .
\end{align*}
\end{prop}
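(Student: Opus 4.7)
The strategy follows the two-step outline given immediately after the theorem. First I pass to the vectorized model \eqref{modvect} with $y_i = vec(x_i x_i^\top)$, $P_m = \Pi_m \otimes \Pi_m$, and the identity $\|\Sigma - \widetilde{\Sigma}\|^2 = n \|f - \widehat{f}_{\widetilde{m}}\|_n^2$, so that $\widetilde{m}$ becomes the minimizer of $\|y - \widehat{f}_m\|_n^2 + \widehat{pen}(m)$. The deterministic penalty $pen(m) = (1+\theta)\delta_m^2 D_m/n$ of Proposition \ref{ExtTh3.1Baraud} has been replaced by the random $\widehat{pen}(m) = (1+\theta)\widehat{\delta}_m^2 D_m/n$; the goal is to show that, up to an $O(1/n)$ remainder, the random penalty behaves as the deterministic one.

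The first step is a Baraud-type inequality with random penalty. Starting from $\|y - \widehat{f}_{\widetilde{m}}\|_n^2 + \widehat{pen}(\widetilde{m}) \leq \|y - \widehat{f}_m\|_n^2 + \widehat{pen}(m)$ for every $m \in \mathcal{M}$, I would replay the computation proving Proposition \ref{ExtTh3.1Baraud} verbatim; the only modification is that two correction terms $\widehat{pen}(m) - pen(m)$ and $\widehat{pen}(\widetilde{m}) - pen(\widetilde{m})$ appear on the right-hand side. This produces
$$\|f - \widehat{f}_{\widetilde{m}}\|_n^2 \leq K(\theta)\big[\|f - P_m f\|_n^2 + pen(m)\big] + R_n + |\widehat{pen}(m) - pen(m)| + |\widehat{pen}(\widetilde{m}) - pen(\widetilde{m})|,$$
where $R_n$ collects exactly the stochastic terms controlled in the proof of Proposition \ref{ExtTh3.1Baraud}, and whose $q$-th moment reproduces the $\delta_{sup}^2 \Delta_\kappa/n$ contribution.

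The second step controls $\widehat{\delta}_m^2 - \delta_m^2$. From $D_m(\widehat{\delta}_m^2 - \delta_m^2) = \mathrm{Tr}\big((\Pi_m\otimes\Pi_m)(\widehat{\Phi} - \Phi)\big)$ and the decomposition
$$\widehat{\Phi} - \Phi = \frac{1}{n}\sum_{i=1}^n \big(y_iy_i^\top - \E[y_1 y_1^\top]\big) - \big(S_{vec}S_{vec}^\top - \mu\mu^\top\big)$$
with $\mu = vec(\Sigma)$, I use $y_i^\top(\Pi_m\otimes\Pi_m)y_i \leq \|y_i\|^2 = \|x_i x_i^\top\|^2$ so that Rosenthal's inequality applied at order $\beta/2$ to the first centered sum yields a moment bound of order $n^{-\beta/4}\, \E\|xx^\top\|^\beta$. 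The cross term contributes through $\mathrm{Tr}((\Pi_m\otimes\Pi_m)(S_{vec}S_{vec}^\top - \mu\mu^\top)) \leq \|S_{vec} - \mu\|(\|S_{vec}\| + \|\mu\|)$, together with $\|\mu\|^2 = \|\Sigma\|^2$ and a further Rosenthal bound on $\|S_{vec} - \mu\|$. The hypothesis $C_{inf} > 0$ gives $\delta_m^2 \geq C_{inf}/D_m$, so dividing by $(\delta_m^2)^{\beta/2}$ and summing over $m \in \mathcal{M}$ via Markov produces the weight $\sum_m \delta_m^{-\beta} D_m^{-\beta/2}$.

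The final step combines the two ingredients via a peeling argument on the good event $A = \{\forall m \in \mathcal{M},\; |\widehat{\delta}_m^2 - \delta_m^2| \leq \alpha \delta_m^2\}$ for a fixed $\alpha \in (0,1)$. On $A$ one has $\widehat{pen}(m) \in [(1-\alpha)pen(m), (1+\alpha)pen(m)]$ for every $m$, so the first step reduces to Baraud's oracle inequality with constants slightly worsened by $\alpha$. Off $A$, a crude bound $\|\Sigma - \widetilde{\Sigma}\|^{2q} \leq C(\|\Sigma\|^{2q} + \|S\|^{2q})$ combined with Hölder's inequality at exponents matched to the Markov decay of $\pr(A^c)$ introduces the factor $(\E\|xx^\top\|^\beta)^{2/\beta}$ from the $L^{\beta/2}$ bound on $\|S\|^2$, while raising the weight $\sum_m \delta_m^{-\beta} D_m^{-\beta/2}$ to the Hölder power $1 - 2q/\kappa$ yields exactly $\widetilde{\Delta}_\beta$. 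Taking $q$-th moments and raising to $1/q$ then produces the stated inequality. The main obstacle is the exponent bookkeeping: $\beta > 2(1+2q)$ is needed for the Rosenthal rate to survive the Hölder conversion to the $q$-th moment, $\beta > 3+2q$ keeps the whole remainder genuinely $O(1/n)$, the interval $\kappa \in (2(1+2q), \min(\beta, 2\beta-4))$ balances the exponent demanded by Proposition \ref{ExtTh3.1Baraud} against the one reachable by Hölder after controlling $\widehat{\Phi}$, and the size condition $n \geq n(\beta,\theta,C_{inf},\Sigma)$ is what allows $\alpha$ to be chosen strictly less than $1$ uniformly over $\mathcal{M}$.
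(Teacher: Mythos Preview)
Your proposal proves the wrong statement. Proposition~\ref{ExtTh3.1Baraud} concerns the estimator $\widehat{f}=\widehat{f}_{\widehat{m}}$ selected with the \emph{deterministic} penalty $pen(m)=(1+\theta)\delta_m^2 D_m/n$; its conclusion involves only $K(\theta)$, $\Delta_\kappa$ and $\delta_{\sup}^2$, and carries no hypothesis on $\beta$, no event on which $\widehat{\delta}_m^2$ is close to $\delta_m^2$, no term $\widetilde{\Delta}_\beta$, and no factor $\E\|xx^\top\|^\beta$ or $\|\Sigma\|^2$. What you have sketched is a proof of Theorem~\ref{theochili} (equivalently, of Proposition~\ref{casgen} together with the verification of A1--A2): you work with $\widetilde{m}$ and $\widehat{pen}$, introduce the good event $A=\{\forall m,\,|\widehat{\delta}_m^2-\delta_m^2|\le\alpha\delta_m^2\}$, invoke Rosenthal at order $\beta/2$ on $\widehat{\Phi}-\Phi$, and arrive at the remainder $\widetilde{\Delta}_\beta$ raised to the H\"older power $1-2q/\kappa$. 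None of this machinery is needed, or even relevant, for the statement you were asked to prove.

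In the paper, Proposition~\ref{ExtTh3.1Baraud} is not proved at all: it is quoted from \cite{MR2684389} (it is the vectorized form of Theorem~\ref{TeorConcentIneqLSEst}). Its proof there is a direct Baraud-style argument using the concentration bound of Proposition~\ref{proBaraud} on $\|P_{m'}\varepsilon\|_n^2$ and on $\langle u_{m'},\varepsilon\rangle_n^2$, integrated against $x^{q-1}$; there is no random penalty to compare with a deterministic one, and no splitting into $\Omega$ and $\Omega^c$. If you intended to prove Proposition~\ref{ExtTh3.1Baraud}, you should drop everything involving $\widehat{\delta}_m^2$, $\widehat{\Phi}$, $\beta$, $C_{\inf}$, and $\widetilde{\Delta}_\beta$, and instead carry out the deviation argument summarized in the proof of Lemma~\ref{lem} (which in fact replays that computation) directly with the fixed penalty $pen(m)$.
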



The new estimator $\widetilde{\Sigma}$ defined previously corresponds here to the estimator  $\widetilde{{f}}=%
\widehat{{f}}_{\widetilde{m}}$ , where%
\begin{equation*}
\widetilde{m}=\arg \underset{m\in \mathcal{M}}{\min }\left\{ \left\Vert 
{y-}\widehat{{f}}_{m}\right\Vert _{n}^{2}+\widehat{pen}\left( m\right)
\right\} ,
\end{equation*}%
with%
\begin{equation*}
\widehat{pen}\left( m\right) =\left(1+ \theta \right) \frac{\widehat{\delta} _{m}^{2}D_{m}}{n},
\end{equation*}

and $\widehat{\delta} _{m}^{2}$ is some estimator of ${\delta} _{m}^{2}$.

Next Proposition gives an oracle inequality for this estimator under new assumptions on the model. As Proposition \ref{ExtTh3.1Baraud}, it is inspired by the paper \cite{BAR}.
\begin{prop}
\label{casgen}
Let $1 \geqslant q>0$ be given such
that there exists $%
\kappa>2\left( 1+2q\right) $ satisfying $%
\mathbb{E}%
\left\Vert \varepsilon _1\right\Vert ^{\kappa}<\infty $.

For $\alpha \in \left]0; 1 \right[$, set $\Omega = \cap_{m \in \mathcal{M}} \left\lbrace\widehat{\delta}_m^2 \geqslant \left(1 - \alpha \right)\delta_m^2 \right\rbrace$.

Assume that 
\begin{itemize}
\item[A1.] $\E \left[ \widehat{\delta}_m^2 \right] \leqslant \delta_m^2 $.
\item[A2.] $\mathbb{P} \left( \Omega^c \right) \leqslant\tilde{ C}\left(\alpha \right)\frac{1}{n^{\gamma}}$ for some $\gamma \geqslant \frac{q}{1- 2q/\kappa}$.
\end{itemize} 
Then, for a constant C depending on $\kappa, \theta$ and $q$, and we have

\begin{align}
\left( \E \left[ \left\Vert f - \widetilde{f} \right\Vert_n^{2q} \right] \right) ^{1/q} & \leqslant   C  \inf_{m\in \mathcal{M}}\left(\left\Vert
f -P _{m}f\right\Vert_n ^{2}+%
\frac{\delta _{m}^{2}D_{m}}{n}\right) 
\\ + & \frac{C}{n}\left[ \widetilde{\Delta}_\kappa \left[ \E \left[ 
\left\Vert \varepsilon_1 \right\Vert^\kappa \right]^\frac{2}{\kappa} + \left\Vert f \right\Vert_n^2 \right] + \delta^2_{sup} \Delta_\kappa \right]
\end{align}
where $$\widetilde{\Delta}^q_\kappa=\left( 
\tilde{C}\left(\alpha\right) \right)^ {\left(1-\frac{2q}{\kappa}\right)}\text{ with } \alpha = \alpha\left(\theta \right)\text{ is fixed in } \left]0;1\right[ $$ 
and $$ \Delta _{\kappa}^{q}=C\left( \theta ,\kappa,q\right) \mathbb{E}\left\Vert \mathbf{%
\varepsilon }_{1}\right\Vert ^{\kappa}\left( \sum\limits_{m\in \mathcal{M}}\delta
_{m}^{-\kappa}D_{m}^{-\left( \kappa/2-1-q\right) }\right) $$ 
\end{prop}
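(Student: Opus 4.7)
The plan is to adapt the proof of Proposition \ref{ExtTh3.1Baraud} (following the Baraud argument in \cite{BAR}) while handling the randomness of the empirical penalty. The starting inequality comes from the definition of $\widetilde{m}$: for every $m \in \mathcal{M}$,
$$\|y - \widehat{f}_{\widetilde{m}}\|_n^2 + \widehat{pen}(\widetilde{m}) \leq \|y - \widehat{f}_m\|_n^2 + \widehat{pen}(m).$$
Substituting $y = f + \varepsilon$ and rearranging as in \cite{BAR} gives a bound on $\|f - \widetilde{f}\|_n^2$ involving $\|f - f_m\|_n^2$, a cross term linear in $\varepsilon$, and the penalty difference $\widehat{pen}(m) - \widehat{pen}(\widetilde{m})$. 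I would then split the expected risk as $\E[\|f-\widetilde{f}\|_n^{2q}\mathbf{1}_\Omega] + \E[\|f-\widetilde{f}\|_n^{2q}\mathbf{1}_{\Omega^c}]$ and treat the two regimes separately.

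On $\Omega$, use the pointwise lower bound $\widehat{pen}(\widetilde{m}) \geq (1-\alpha)\,pen(\widetilde{m})$ to replace the data-dependent penalty on the left-hand side of the basic inequality; on the right-hand side, assumption A1 gives $\E[\widehat{pen}(m)] \leq pen(m)$ after taking expectations. This reduces the analysis to the setting of Proposition \ref{ExtTh3.1Baraud} with multiplicative factor $(1-\alpha)(1+\theta)$ in place of $(1+\theta)$. Choosing $\alpha = \alpha(\theta)\in(0,1)$ small enough preserves the contraction margin required in the Baraud chaining argument, so that the standard moment bounds on suprema of weighted sums of $\varepsilon_i$ produce the oracle term and the $\Delta_\kappa$ remainder, at the cost of strengthening the moment assumption from $\kappa>2(1+q)$ to $\kappa>2(1+2q)$.

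On $\Omega^c$, bound crudely $\|f - \widetilde{f}\|_n \leq 2\|f\|_n + \|\varepsilon\|_n$, since $\widehat{f}_m = P_m y$ is a projection of $y$. H\"older's inequality with conjugate exponents $(\kappa/(2q),\,\kappa/(\kappa-2q))$ yields
$$\E\bigl[\|\varepsilon\|_n^{2q}\mathbf{1}_{\Omega^c}\bigr] \leq \bigl(\E\|\varepsilon_1\|^{\kappa}\bigr)^{2q/\kappa}\,\pr(\Omega^c)^{1-2q/\kappa},$$
and assumption A2 with $\gamma \geq q/(1-2q/\kappa)$ forces $\pr(\Omega^c)^{1-2q/\kappa} \leq \tilde C(\alpha)^{1-2q/\kappa}/n^q$, which makes this contribution of order $n^{-q}$. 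The deterministic part is handled in the same way: $\|f\|_n^{2q}\,\pr(\Omega^c)$ is of order $\|f\|_n^{2q}/n^q$. Assembling both regimes and applying $(a+b)^{1/q}\leq 2^{(1/q-1)_+}(a^{1/q}+b^{1/q})$ produces the claimed inequality, the $\widetilde{\Delta}_\kappa$ term originating from the $\Omega^c$ part.

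The main technical obstacle is the $\Omega$-restricted Baraud argument: the event $\Omega$ depends on the same sample as $\widetilde{m}$ and $\varepsilon$, so the reduction to a deterministic penalty must be carried out carefully to keep the concentration/chaining bounds for the empirical cross term $\langle \varepsilon,\widehat{f}_{\widetilde{m}}-f_m\rangle_n$ valid. The combination of A1, which removes the bias of $\widehat{pen}$, and A2, which controls its stochastic deviation, is essential, and the quantitative gap between $\kappa>2(1+q)$ and $\kappa>2(1+2q)$ is precisely what is needed to absorb both the $(1-\alpha)$ loss in the contraction and the H\"older exponent $\kappa/(2q)$ used on $\Omega^c$.
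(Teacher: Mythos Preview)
Your proposal is correct and follows essentially the same approach as the paper: split into $\Omega$ and $\Omega^c$, run the Baraud deviation argument pathwise on $\Omega$ using $\widehat{pen}(m')\geq(1-\alpha)(1+\theta)\delta_{m'}^2 D_{m'}/n$ for every $m'$ (with $\alpha$ chosen so that $(1-\alpha)(1+\theta)\geq 1+2\eta$), apply A1 only at the very end---after Jensen---to the fixed comparison model $m_0$, and handle $\Omega^c$ by a crude bound plus H\"older with exponent $\kappa/(2q)$ and A2. The paper packages the $\Omega$-part as a separate lemma and uses the Pythagorean bound $\|f-\widetilde f\|_n^2\leq\|f\|_n^2+\|\varepsilon\|_n^2$ on $\Omega^c$ rather than your triangle inequality, but these are cosmetic differences.
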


Theorem \ref{theochili} is thus a direct application of Proposition \ref{casgen}. Hence only remain to be checked the two assumptions A1 and A2.

\subsection{Auxiliary concentration type lemmas}
\label{auxlem}

Here we state some propositions required in the proofs of the previous results.

To our knowledge, the first is due to von Bahr and Esseen in \cite{MR0170407}.
\begin{lem}

\label{paraderose}
Let $U_1,\dots ,U_n$ independent centred variables with values in $\mathbb{R}$. For any $1 \leqslant \kappa \leqslant 2$ we have :
$$\E \left[\left\vert \sum_{i=1}^n U_i \right\vert^\kappa \right] \leqslant 8 \sum_{i=1}^n\E \left[\left\vert  U_i \right\vert^\kappa \right]$$

\end{lem}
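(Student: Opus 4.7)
This is the classical von Bahr--Esseen inequality with a generous constant (the sharp one is $2$), so I would prove it by symmetrization with plenty of room to spare.

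First I would reduce to the symmetric case. Let $U_1',\dots,U_n'$ be an independent copy of $(U_1,\dots,U_n)$ and set $\tilde U_i := U_i - U_i'$. Since the $U_i'$ are centred and independent of the $U_i$, one has $\sum_i U_i = \E\bigl[\sum_i \tilde U_i \,\big|\, U_1,\dots,U_n\bigr]$, and conditional Jensen (using that $|\cdot|^\kappa$ is convex since $\kappa \geqslant 1$) yields
$$\E \Big| \sum_{i=1}^n U_i \Big|^\kappa \;\leqslant\; \E \Big| \sum_{i=1}^n \tilde U_i \Big|^\kappa.$$
At this point the $\tilde U_i$ are independent, centred, and symmetric.

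Next I would insert Rademacher signs: let $\varepsilon_1,\dots,\varepsilon_n$ be i.i.d.\ with $\mathbb{P}(\varepsilon_i=\pm 1)=1/2$, independent of $(\tilde U_i)$. By symmetry of each $\tilde U_i$, the family $(\tilde U_i)$ has the same joint law as $(\varepsilon_i \tilde U_i)$. Conditioning on $(\tilde U_i)$ and applying Jensen's inequality to the \emph{concave} map $x \mapsto x^{\kappa/2}$ (valid since $\kappa \leqslant 2$), together with orthogonality of the signs, gives
$$\E_\varepsilon \Big| \sum_{i=1}^n \varepsilon_i \tilde U_i \Big|^\kappa \;\leqslant\; \Big( \E_\varepsilon \Big| \sum_{i=1}^n \varepsilon_i \tilde U_i \Big|^2 \Big)^{\kappa/2} \;=\; \Big( \sum_{i=1}^n \tilde U_i^{\,2} \Big)^{\kappa/2}.$$
The embedding $\ell^\kappa \hookrightarrow \ell^2$ for $\kappa \leqslant 2$ then yields $\bigl(\sum_i \tilde U_i^{\,2}\bigr)^{\kappa/2} \leqslant \sum_i |\tilde U_i|^\kappa$. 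Taking expectations and using $\E|\tilde U_i|^\kappa \leqslant 2^{\kappa-1}\bigl(\E|U_i|^\kappa + \E|U_i'|^\kappa\bigr) = 2^\kappa \E|U_i|^\kappa \leqslant 4\, \E|U_i|^\kappa$ (triangle inequality in $L^\kappa$) produces the target bound with the constant $4$, which is well within the stated $8$.

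There is essentially no obstacle beyond bookkeeping: the only subtle point is that the two Jensen applications go in opposite directions --- the convex one ($|\cdot|^\kappa$ with $\kappa \geqslant 1$) is used to condition away the independent copy, while the concave one ($x^{\kappa/2}$ with $\kappa \leqslant 2$) is used to exploit the orthogonality of the Rademacher signs. Both convexity facts, together with the monotonicity $\|a\|_2 \leqslant \|a\|_\kappa$, all pull in the same direction for $\kappa \in [1,2]$, which is exactly the range in which the inequality holds.
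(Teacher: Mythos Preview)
Your argument is correct and in fact yields the constant $4$, comfortably inside the stated $8$. The paper itself does not prove this lemma: it simply attributes the inequality to von Bahr and Esseen and cites their original article, so there is no ``paper's proof'' to compare against. Your symmetrization route (independent copy $+$ Rademacher signs $+$ concave Jensen $+$ the $\ell^\kappa\hookrightarrow\ell^2$ embedding) is one of the standard modern proofs of this fact and is arguably cleaner than the original 1965 argument, which proceeds via characteristic functions.
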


The next proposition is proved in \cite{MR2684389}.
\begin{prop}
\label{proBaraud}
\label{ExtCor5.1Baraud}
Given $N,k\in \mathbb{N}$, let $\widetilde{{A}}\in \mathbb{R}%
^{Nk\times Nk}\diagdown \left\{ {0}\right\} $ be a non-negative
definite and symmetric matrix and ${\varepsilon }_{1},...,{%
\varepsilon }_{N}$ \ i.i.d random vectors in $\mathbb{R}^{k}$ with $\mathbb{E%
}\left( {\varepsilon }_{1}\right) =0$ and ${V}\left({%
\varepsilon }_{1}\right) ={\Phi }$. Write ${\varepsilon }%
=\left( {\varepsilon }_{1}^{\top},...,{\varepsilon }%
_{N}^{\top}\right) ^{\top}$, $\zeta \left( {\varepsilon }\right) =\sqrt{%
{\varepsilon }^{\top}\widetilde{A}{\varepsilon }}$, and $\delta
^{2}=\frac{\mathrm{Tr}\left( \widetilde{{A}}\left( {I}%
_{N}\otimes{\Phi }\right) \right) }{\mathrm{Tr}\left( \widetilde{%
{A}}\right) }$. For all $\beta\geq 2$ such that $\mathbb{E}\left\Vert
{\varepsilon }_{1}\right\Vert ^{\beta}<\infty $ it holds that, for all $%
x>0$,
\begin{equation}
\mathbb{P}\left( \zeta ^{2}\left( {\varepsilon }\right) \geq \delta
^{2}\mathrm{Tr}\left( \widetilde{{A}}\right) +2\delta ^{2}\sqrt{%
\mathrm{Tr}\left( \widetilde{{A}}\right) \rho \left( \widetilde{%
{A}}\right) x}+\delta ^{2}\rho \left( \widetilde{{A}}\right)
x\right) \leq C_2\left( \beta\right) \frac{\mathbb{E}\left\Vert \varepsilon
_{1}\right\Vert ^{\beta}\mathrm{Tr}\left( \widetilde{{A}}\right) }{\delta
^{\beta}\rho \left( \widetilde{{A}}\right) x^{\beta/2}},
\label{BoundCor5.1Baraud}
\end{equation}%
where the constant $C_2\left( \beta\right) $ depends only on $\beta.$
\end{prop}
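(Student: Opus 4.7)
The plan is to rewrite the threshold as a perfect square, reduce the tail bound to a deviation of $\zeta$ from its mean, and then control the $\beta$-th central moment of $\zeta$ by a Rosenthal/Pinelis-type inequality that exploits the block structure of $\widetilde{A}$.

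First I would compute the expectation: since the $\varepsilon_i$ are independent and centred with $V(\varepsilon_1)=\Phi$, the full covariance is $\mathbb{E}[\varepsilon\varepsilon^\top]=I_N\otimes\Phi$, so
\[
\mathbb{E}[\zeta^2(\varepsilon)] = \mathrm{Tr}\bigl(\widetilde{A}(I_N\otimes\Phi)\bigr) = \delta^2\,\mathrm{Tr}(\widetilde{A}),
\]
and Jensen's inequality gives $\mathbb{E}[\zeta(\varepsilon)] \leq \delta\sqrt{\mathrm{Tr}(\widetilde{A})}$. The threshold factors as a perfect square $(\delta\sqrt{\mathrm{Tr}(\widetilde{A})}+\delta\sqrt{\rho(\widetilde{A})x})^2$, so the event of interest is contained in $\{\zeta(\varepsilon)-\mathbb{E}\zeta(\varepsilon)\geq \delta\sqrt{\rho(\widetilde{A})x}\}$, and Markov's inequality with the $\beta$-th power yields
\[
\mathbb{P}(\ldots) \leq \frac{\mathbb{E}|\zeta(\varepsilon)-\mathbb{E}\zeta(\varepsilon)|^\beta}{\delta^\beta\,\rho(\widetilde{A})^{\beta/2}\,x^{\beta/2}}.
\]

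The substantive step is bounding this central moment. Writing $\widetilde{A}=B^\top B$ and decomposing $B$ into its $Nk\times k$ block columns $B_{(i)}$, one has $\zeta(\varepsilon)=\|B\varepsilon\|=\|\sum_{i=1}^N Y_i\|$ with $Y_i=B_{(i)}\varepsilon_i$ independent and centred in $\mathbb{R}^{Nk}$. I would invoke a Rosenthal/Pinelis-type inequality for centred Hilbert-valued sums,
\[
\mathbb{E}|\zeta(\varepsilon)-\mathbb{E}\zeta(\varepsilon)|^\beta \leq C(\beta)\Bigl[\sigma^\beta + \sum_{i=1}^N\mathbb{E}\|Y_i\|^\beta\Bigr],\qquad \sigma^2 = \|V(B\varepsilon)\|_{op},
\]
which can alternatively be obtained from a Doob-martingale decomposition on $\sigma(\varepsilon_1,\ldots,\varepsilon_i)$ combined with Burkholder--Rosenthal, using the Lipschitz property $|\zeta(\varepsilon)-\zeta(\varepsilon')|\leq \zeta(\varepsilon-\varepsilon')$ of the seminorm. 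Since $V(B\varepsilon)=B(I_N\otimes\Phi)B^\top$, one has $\sigma^2\leq \rho(\widetilde{A})\rho(\Phi)\leq \rho(\widetilde{A})\mathbb{E}\|\varepsilon_1\|^2$, hence $\sigma^\beta\leq \rho(\widetilde{A})^{\beta/2}\mathbb{E}\|\varepsilon_1\|^\beta$ by Jensen. For the blocks, $\mathbb{E}\|Y_i\|^\beta=\mathbb{E}[(\varepsilon_i^\top\widetilde{A}_{ii}\varepsilon_i)^{\beta/2}]\leq \rho(\widetilde{A}_{ii})^{\beta/2}\mathbb{E}\|\varepsilon_1\|^\beta$, and by Cauchy interlacing $\rho(\widetilde{A}_{ii})\leq\rho(\widetilde{A})$ together with positive semi-definiteness $\rho(\widetilde{A}_{ii})\leq\mathrm{Tr}(\widetilde{A}_{ii})$ give $\rho(\widetilde{A}_{ii})^{\beta/2}\leq\rho(\widetilde{A})^{\beta/2-1}\mathrm{Tr}(\widetilde{A}_{ii})$. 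Summing over $i$ and using $\rho(\widetilde{A})\leq\mathrm{Tr}(\widetilde{A})$ to absorb $\sigma^\beta$ yields
\[
\mathbb{E}|\zeta(\varepsilon)-\mathbb{E}\zeta(\varepsilon)|^\beta \leq C(\beta)\,\mathrm{Tr}(\widetilde{A})\,\rho(\widetilde{A})^{\beta/2-1}\,\mathbb{E}\|\varepsilon_1\|^\beta,
\]
which plugged into the Markov step produces the announced inequality with constant $C_2(\beta)$.

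The main obstacle is obtaining the correct \emph{centred} moment bound. Applying Rosenthal directly to $\mathbb{E}\|B\varepsilon\|^\beta$ without centring the norm produces a parasitic term of order $\delta^\beta\,\mathrm{Tr}(\widetilde{A})^{\beta/2}$, which after Markov contributes $(\mathrm{Tr}(\widetilde{A})/\rho(\widetilde{A}))^{\beta/2}x^{-\beta/2}$ and destroys the target bound. Exploiting the concentration of the norm around its mean, either via Pinelis-type inequalities or via a martingale argument using the seminorm's Lipschitz property, is the technical heart of the proof; the remaining steps (mean computation, the square-factoring of the threshold, and the final Markov step) are elementary bookkeeping.
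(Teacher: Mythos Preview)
The paper does not supply its own proof of this proposition; it merely records the statement and cites \cite{MR2684389} for the argument. Your proposal reconstructs essentially the route taken there, which in turn extends Baraud's scalar-error Corollary~5.1 to the block/vector setting: compute $\mathbb{E}\zeta^{2}=\delta^{2}\mathrm{Tr}(\widetilde{A})$, factor the threshold as $\bigl(\delta\sqrt{\mathrm{Tr}(\widetilde{A})}+\delta\sqrt{\rho(\widetilde{A})x}\bigr)^{2}$, reduce to a deviation of $\zeta$ above its mean, apply Markov at order~$\beta$, and control $\mathbb{E}\lvert\zeta-\mathbb{E}\zeta\rvert^{\beta}$ by a supremum-of-sums moment inequality with the weak variance $\sigma^{2}=\lVert V(B\varepsilon)\rVert_{op}$. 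Your block-column decomposition $Y_{i}=B_{(i)}\varepsilon_{i}$, the interlacing bound $\rho(\widetilde{A}_{ii})\le\rho(\widetilde{A})$, and the telescoping $\rho(\widetilde{A}_{ii})^{\beta/2}\le\rho(\widetilde{A})^{\beta/2-1}\mathrm{Tr}(\widetilde{A}_{ii})$ are exactly the devices needed to recover the factor $\mathrm{Tr}(\widetilde{A})\rho(\widetilde{A})^{\beta/2-1}$ on the right-hand side.

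One point worth sharpening: the inequality you call ``Rosenthal/Pinelis-type'' with the \emph{weak} variance $\sigma^{2}$ is not Pinelis' Hilbert-space Rosenthal inequality (that one carries the strong variance $\sum_{i}\mathbb{E}\lVert Y_{i}\rVert^{2}$, which would reintroduce the parasitic $\mathrm{Tr}(\widetilde{A})^{\beta/2}$ term you rightly warn against). What you need is the moment bound for suprema of centred empirical processes, $\zeta=\sup_{\lVert u\rVert=1}\sum_{i}\langle u,Y_{i}\rangle$, of the Ledoux/Gin\'e--Lata{\l}a--Zinn type, which indeed yields $\mathbb{E}\lvert\zeta-\mathbb{E}\zeta\rvert^{\beta}\le C(\beta)\bigl(\sigma^{\beta}+\mathbb{E}\max_{i}\lVert Y_{i}\rVert^{\beta}\bigr)$ with the weak variance; this is what Baraud invokes in the scalar case and what the cited reference adapts. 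Your martingale-difference alternative via the Lipschitz seminorm is a valid way to reach the same conclusion. With that clarification, the sketch is correct and aligned with the intended proof.
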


\section{Appendix} \label{appen}
\subsection{Proof of Proposition \ref{casgen} }
This proof follows the guidelines of the proof of Theorem 6.1 in \cite{BAR}.
The following lemma will be helpful for the proof of this proposition
\begin{lem}
\label{lem}
Choose $\eta=\eta \left(\theta \right)>0$ and  $\alpha =\alpha \left( \theta \right) \in \left] 0 ; 1 \right[$ such that $\left(1+\theta \right)\left(1-\alpha\right) \geqslant \left(1+2\eta \right)$.
Set 
$H_m\left(f \right)= \left\lbrace \left\Vert f- \widetilde{f} \right\Vert_n ^ 2 - \tilde{\kappa} \left( \theta \right)  \left[ \left\Vert f- f_m\right\Vert^2_n + \frac{D_m}{n}\widehat{\delta}_m ^2 \right] \right\rbrace_+$ where $\tilde{\kappa} \left( \theta \right) = \left(2 + \frac{4}{\eta}\right)\left(1+\theta \right) $.  
Then, for ${m_0}$ minimizing  $m \mapsto \left\Vert f- f_m\right\Vert^2_n + \frac{D_m}{n}{\delta}_m ^2$  in $ m \in \mathcal{M}$
\begin{equation}
\label{eqlem}
\E \left[ H_{m_0}\left(f \right)^q \mathbf{1}_\Omega\right] \leqslant  \Delta^q_ \kappa \delta^{2q}_{sup} \frac{1}{n^q}.
\end{equation}
where $\Delta_ \kappa$ was defined in Proposition \ref{casgen}.
\end{lem}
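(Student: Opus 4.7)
The strategy follows the proof of Theorem~6.1 in~\cite{BAR}, transposed to the vectorized model with the data-driven penalty. The starting point is the defining inequality of $\widetilde m$ specialised to $m_0$:
\[
\|y-\widetilde f\|_n^2+\widehat{pen}(\widetilde m)\;\leq\;\|y-\widehat f_{m_0}\|_n^2+\widehat{pen}(m_0).
\]
Writing $y=f+\varepsilon$ and $\widehat f_m=f_m+P_m\varepsilon$ and invoking Pythagoras in $\|\cdot\|_n$ (since $f-f_m\in\mathcal{S}_m^{\perp}$ and $P_m\varepsilon\in\mathcal{S}_m$), this reduces to a deterministic inequality whose stochastic part consists of the quadratic forms $\|P_{\widetilde m}\varepsilon\|_n^2$, $\|P_{m_0}\varepsilon\|_n^2$ and the cross term $\langle f_{\widetilde m}-f_{m_0},\varepsilon\rangle_n$; the latter carries all the dependence on the random index $\widetilde m$.

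Next, since $f_{\widetilde m}-f_{m_0}\in\mathcal{S}_{\widetilde m,m_0}:=\mathcal{S}_{\widetilde m}+\mathcal{S}_{m_0}$, Cauchy--Schwarz and Young's inequality $2ab\leq\eta a^2+b^2/\eta$ turn the cross term into $2\eta\|f-\widetilde f\|_n^2+2\eta\|f-f_{m_0}\|_n^2+\eta^{-1}\|P_{\widetilde m,m_0}\varepsilon\|_n^2$. The role of $\Omega$ is the lower bound $\widehat{pen}(\widetilde m)\geq(1+\theta)(1-\alpha)\delta_{\widetilde m}^2 D_{\widetilde m}/n\geq(1+2\eta)\delta_{\widetilde m}^2 D_{\widetilde m}/n$ by the choice of $\alpha,\eta$; on the other hand $\widehat{pen}(m_0)\leq\widetilde\kappa(\theta)D_{m_0}\widehat\delta_{m_0}^2/n$ since $(1+\theta)\leq\widetilde\kappa(\theta)$. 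Absorbing $2\eta\|f-\widetilde f\|_n^2$ to the left and collecting constants into $\widetilde\kappa(\theta)=(2+4/\eta)(1+\theta)$ gives, pointwise on $\Omega$,
\[
H_{m_0}(f)\;\leq\;C_1(\theta)\left[\|P_{\widetilde m,m_0}\varepsilon\|_n^2-C_2(\theta)\,\delta_{\widetilde m}^2 D_{\widetilde m}/n\right]_+
\]
for explicit constants $C_1,C_2$ depending only on $\theta$ (through $\eta=\eta(\theta)$).

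Since $\widetilde m$ is random, I dominate the right-hand side by a sum over $m\in\mathcal{M}$ and use subadditivity of $a\mapsto a^q$ on $\R_+$ for $q\in(0,1]$:
\[
\E\bigl[H_{m_0}(f)^q\mathbf{1}_\Omega\bigr]\;\leq\;C_1(\theta)^q\sum_{m\in\mathcal{M}}\E\!\left[\bigl(\|P_{m,m_0}\varepsilon\|_n^2-C_2(\theta)\,\delta_m^2 D_m/n\bigr)_+^q\right].
\]
For each summand Proposition~\ref{ExtCor5.1Baraud} applies with $\widetilde A=P_{m,m_0}$, $N=n$, $k=p^2$: then $\rho(\widetilde A)=1$, $\mathrm{Tr}(\widetilde A)=D_{m,m_0}$ and $\mathrm{Tr}(\widetilde A(I_n\otimes\Phi))=D_{m,m_0}\delta_{m,m_0}^2$. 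Choosing the deviation parameter $x$ proportional to $D_m\delta_m^2$ and performing one more Young split on the intermediate $\sqrt{\cdot}$ term, the linear and square-root contributions are absorbed into $C_2(\theta)\delta_m^2 D_m/n$, leaving a polynomial tail of order $x^{-\kappa/2}$. Integrating against $q x^{q-1}\,dx$ produces a moment bound of order $\delta_{\sup}^{2q}n^{-q}\,\E\|\varepsilon_1\|^\kappa\delta_m^{-\kappa}D_m^{-(\kappa/2-1-q)}$, and summing over $m$ reconstructs $\Delta_\kappa^q\delta_{\sup}^{2q}/n^q$.

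The principal obstacle lies in this last step: the natural quantities of Proposition~\ref{ExtCor5.1Baraud} are the joint-subspace parameters $\delta_{m,m_0}$ and $D_{m,m_0}$, which must be matched to the individual $\delta_m,D_m$ appearing in $\Delta_\kappa$. This requires tuning $\eta=\eta(\theta)$ and $\alpha=\alpha(\theta)$ so that the successive Young splits absorb exactly $C_2(\theta)D_m\delta_m^2/n$, and invoking $\kappa/2-1-q>0$ (implied by $\kappa>2(1+q)$, hence by the hypothesis $\kappa>2(1+2q)$ of Proposition~\ref{casgen}) so that the tail integrates and the sum over $\mathcal{M}$ converges.
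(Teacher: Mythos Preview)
Your strategy differs from the paper's in one decisive respect, and that difference creates a genuine gap. You control the cross term $2\langle f_{\widetilde m}-f_{m_0},\varepsilon\rangle_n$ by Cauchy--Schwarz against the joint projection $P_{\widetilde m,m_0}$ onto $\mathcal S_{\widetilde m}+\mathcal S_{m_0}$, which leaves you with quadratic forms $\|P_{m,m_0}\varepsilon\|_n^2$ whose expectation is $\delta_{m,m_0}^2 D_{m,m_0}/n$. For the subsequent centering and tail integration to work you need $\delta_{m,m_0}^2 D_{m,m_0}\leq C(\theta)\bigl(\delta_m^2 D_m+\delta_{m_0}^2 D_{m_0}\bigr)$, and in the heteroscedastic setting of this paper that inequality is simply false in general: there is no universal constant $C$ with $\mathrm{Tr}(P_{m,m_0}(I_n\otimes\Phi))\leq C\bigl(\mathrm{Tr}(P_m(I_n\otimes\Phi))+\mathrm{Tr}(P_{m_0}(I_n\otimes\Phi))\bigr)$, because $P_{m,m_0}\not\leq C(P_m+P_{m_0})$ for nearly parallel subspaces. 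This is exactly the obstacle you flag at the end, and tuning $\eta(\theta),\alpha(\theta)$ cannot repair it since the failure is structural, not a matter of constants. Your route is the one that works in Baraud's original homoscedastic setting (where $\delta_m^2\equiv\sigma^2$ and only $D_{m,m_0}\leq D_m+D_{m_0}$ is needed), but it does not transfer here.

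The paper avoids the joint subspace entirely by decomposing the cross term $2\langle\widetilde f-P_{m_0}f,\varepsilon\rangle_n$ into three pieces,
\[
2\langle f-P_{m_0}f,\varepsilon\rangle_n+2\langle P_{\widetilde m}f-f,\varepsilon\rangle_n+2\langle P_{\widetilde m}\varepsilon,\varepsilon\rangle_n,
\]
and handling the first two via the unit vectors $u_{m'}=(P_{m'}f-f)/\|P_{m'}f-f\|_n$ together with the Young inequality $2\|f-P_{m'}f\|_n|\langle u_{m'},\varepsilon\rangle_n|\leq\nu\|f-P_{m'}f\|_n^2+\nu^{-1}\langle u_{m'},\varepsilon\rangle_n^2$, while the third is just $2G_{\widetilde m}^2=2\|P_{\widetilde m}\varepsilon\|_n^2$. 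All three resulting quadratic forms are indexed by a \emph{single} model $m'$, so the concentration bound of Proposition~\ref{ExtCor5.1Baraud} produces exactly the quantities $\delta_{m'}^2$ and $D_{m'}$ that appear in $\Delta_\kappa$, with no joint-space variance to control. After this reduction the argument coincides with the end of the proof of Proposition~\ref{ExtTh3.1Baraud} in~\cite{MR2684389}, choosing $\nu=2/(2+\eta)$ and auxiliary functions $\widetilde p_1,\widetilde p_2$ so that $pen(m')\geq(2-\nu)\widetilde p_1(m')+\nu^{-1}\widetilde p_2(m')$; on $\Omega$ one has $\widehat{pen}\geq pen$, which is the only place where the event $\Omega$ enters.
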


\begin{proof}{\textbf{Lemma \ref{lem}}}

First, remark that on the set $\Omega$, for all $m\in \mathcal{M}$  
$$\widehat{pen}\left(m \right) \geqslant (1-\alpha )\left(1+ \theta \right)\frac{{\delta} _{m}^{2}D_{m}}{n} \geqslant (1+2\eta)\frac{{\delta} _{m}^{2}D_{m}}{n}.$$
Set $pen(m)=(1+2\eta)\frac{{\delta} _{m}^{2}D_{m}}{n}$, which corresponds to the penalty of Proposition \ref{ExtTh3.1Baraud}.

The proof of this lemma is based on the proof of Proposition \ref{ExtTh3.1Baraud} in \cite{MR2684389}.
In fact, it is sufficient to prove 
that for each $x>0$ and $ \kappa\geq 2$%
\begin{equation}
\mathbb{P}\left( \mathcal{H}\left( {f}\right)\mathbf{1}_\Omega \geq \left( 1+\frac{2}{%
\eta }\right) \frac{x}{n}\delta _{m}^{2}\right) \leq c\left(  \kappa,\eta \right)
\mathbb{E}\left\Vert {\varepsilon }_{1}\right\Vert
^{ \kappa}\sum\limits_{m\in \mathcal{M}}\frac{1}{\delta _{m}^{\kappa}}\frac{D_{m}\vee
1}{\left({%
\eta}D_{m}+x\right) ^{ \kappa/2}},  \label{ineqPH}
\end{equation}%
where we have set
\begin{equation*}
\mathcal{H}\left( {f}\right) =\left[ \left\Vert {f}-\widetilde{%
{f}}\right\Vert _{n}^{2}-\left( 2+\frac{4}{\eta }\right)\left\lbrace\left\Vert f- f_{m_0}
\right\Vert^2_n +\widehat{pen}\left( m_0\right) \right\rbrace \right] _{+}.
\end{equation*}%
Indeed, for
each $m\in \mathcal{M}$,
\begin{align*}
\left\Vert f- f_{m_0} \right \Vert ^2_n +\widehat{pen}\left( m_0\right) & =
\left\Vert f- f_{m_0} \right \Vert ^2_n +\left(1+ \theta \right)
\frac{\widehat{\delta }_{{m_0}}^{2}}{n}D_{{m_0}} \\
& \leq \left( 1+\theta \right) \left( \left\Vert f- f_{m_0} \right \Vert ^2_n +\frac{\widehat{\delta} _{{m_0}}^{2}}{n}D_{{m_0}}\right)
\end{align*}%
then we get that for all $q>0$,%
\begin{equation}
\mathcal{H}^{q}\left( {f}\right)\mathbf{1}_\Omega \geq  H_{m_0}^q\left(f \right)\mathbf{1}_\Omega \label{ineqHq}
\end{equation}%

Using the equality $$\E \left[ \mathcal{H}^{q}\left( {f}\right)\mathbf{1}_\Omega \right] = \int_0^\infty q u^{q-1} \mathbb{P} \left( \mathcal{H}^{q}\left( {f}\right)\mathbf{1}_\Omega > u \right) du$$ and following the proof of Propositon \ref{ExtTh3.1Baraud} in \cite{MR2684389} we obtain the upper bound \eqref{eqlem} of Lemma \ref{lem}.

Now we turn to the proof of \eqref{ineqPH}.
For any ${g}\in $ $\mathbb{R}^{np^2}$ we define the empirical quadratic loss
function by
\begin{equation*}
\gamma _{n}\left( {g}\right) =\left\Vert {y-g}\right\Vert
_{n}^{2}.
\end{equation*}%
Using the definition of $\gamma _{n}$ we have that for all ${g}\in $ $%
\mathbb{R}^{np^2}$,%

\begin{equation*}
\left\Vert {f}-{g}\right\Vert _{n}^{2}=\gamma _{n}\left(
{g}\right) +2\left\langle {g}-{y},{\ {%
\varepsilon }}\right\rangle _{n}+\left\Vert {\ {\varepsilon }}%
\right\Vert _{n}^{2}
\end{equation*}%
and therefore%
\begin{equation}
\left\Vert {f}-\widetilde{f}\right\Vert _{n}^{2}-\left\Vert
{f}-{P}_{m_0}{f}\right\Vert _{n}^{2}=\gamma _{n}\left(
\widetilde{f}\right) -\gamma _{n}\left( {P}_{m_0}{f}%
\right) +2\left\langle \widetilde{f}-{P}_{m_0}{f},%
{\ {\varepsilon }}\right\rangle _{n}.  \label{eq7}
\end{equation}

Using the definition of $\widetilde{f}$, we know that%
\begin{equation*}
\gamma _{n}\left( \widetilde{{f}}\right) +\widehat{pen}\left( \widetilde{m}%
\right) \leq \gamma _{n}\left( {g}\right) +\widehat{pen}\left( m_0\right)
\end{equation*}%
 for all ${g}\in \mathcal{S}_{m_0}$.
Then%
\begin{equation}
\gamma _{n}\left( \widetilde{{f}}\right) -\gamma _{n}\left( {P}%
_{m_0}{f}\right) \leq \widehat{pen}\left( m_0\right) -\widehat{pen}\left( \widetilde{m}\right) .
\label{eq7.1}
\end{equation}%
So we get from (\ref{eq7}) and (\ref{eq7.1}) that%
\begin{align}
\left\Vert {f}-\widetilde{f}\right\Vert _{n}^{2}\leq
& \left\Vert {f}-{P}_{m_0}{f}\right\Vert _{n}^{2}+  \widehat{pen}\left(
m_0\right) -\widehat{pen}\left( \widetilde{m}\right) \nonumber \\
 & +2\left\langle {f}-{P}%
_{m_0}{f},{\ {\varepsilon }}\right\rangle
_{n}+2\left\langle {P}_{\widetilde{m}}{f}-{f},{\
{\varepsilon }}\right\rangle _{n}+2\left\langle \widetilde{f}%
-{P}_{\widetilde{m}}{f},{\ {\varepsilon }}%
\right\rangle _{n}.  \label{eq8}
\end{align}%
In the following we set for each $m^{\prime }\in \mathcal{M}$,
\begin{align*}
\mathcal{B}_{m^{\prime }}& =\left\{ {g}\in \mathcal{S}_{m^{\prime
}}:\left\Vert {g}\right\Vert _{n}\leq 1\right\} , \\
G_{m^{\prime }}& =\underset{t\in \mathcal{B}_{m^{\prime }}}{\sup }%
\left\langle {g},{\ {\varepsilon }}\right\rangle
_{n}=\left\Vert {P}_{m^{\prime }}{\ {\varepsilon }}%
\right\Vert _{n}, \\
& {u}_{m^{\prime }}=%
\begin{cases}
\frac{{P}_{m^{\prime }}{f}-{f}}{\left\Vert {P}%
_{m^{\prime }}{f}-{f}\right\Vert _{n}} & \text{ if }\left\Vert
{P}_{m^{\prime }}{f}-{f}\right\Vert _{n}\neq 0 \\
0 & \text{ otherwise.}%
\end{cases}%
\end{align*}%
Since $\widetilde{f}=$ ${P}_{\widetilde{m}}$ ${f}+$ $%
{P}_{\widetilde{m}}$\textbf{\ ${\varepsilon }$}, (\ref{eq8})
gives%
\begin{equation*}
\left\Vert {f}-\widetilde{f}\right\Vert _{n}^{2}\leq
\left\Vert {f}-{P}_{m_0}{f}\right\Vert _{n}^{2}+\widehat{pen}\left(
m_0\right) -\widehat{pen}\left( \widetilde{m}\right)
\end{equation*}%
\begin{equation}
+2\left\Vert {f}-{P}_{m_0}{f}\right\Vert _{n}\left\vert
\left\langle {u}_{m_0},{\ {\varepsilon }}\right\rangle
_{n}\right\vert +2\left\Vert {f}-{P}_{\widetilde{m}}{f}%
\right\Vert _{n}\left\vert \left\langle {u}_{\widetilde{m}},{\
{\varepsilon }}\right\rangle _{n}\right\vert +2G_{\widetilde{m}}^{2}.
\label{eq9}
\end{equation}%
Using repeatedly the following elementary inequality that holds for all
positive numbers $\nu ,x,z$%
\begin{equation}
2xz\leq \nu x^{2}+\frac{1}{\nu }z^{2}  \label{eq101}
\end{equation}%
we get for any $m^{\prime }\in \mathcal{M}$%
\begin{equation}
2\left\Vert {f}-{P}_{m^{\prime }}{f}\right\Vert
_{n}\left\vert \left\langle {u}_{m^{\prime }},{\ {%
\varepsilon }}\right\rangle _{n}\right\vert \leq \nu \left\Vert {f}%
-{P}_{m^{\prime }}{f}\right\Vert _{n}^{2}+\frac{1}{\nu }%
\left\vert \left\langle {u}_{m^{\prime }},{\ {%
\varepsilon }}\right\rangle _{n}\right\vert ^{2}.  \label{eq11}
\end{equation}%
By Pythagora's Theorem we have
\begin{align}
\left\Vert {f}-\widetilde{f}\right\Vert _{n}^{2}&
=\left\Vert {f}-{P}_{\widetilde{m}}{f}\right\Vert
_{n}^{2}+\left\Vert {P}_{\widetilde{m}}{f}-\widetilde{f}%
\right\Vert _{n}^{2}  \notag \\
& =\left\Vert {f}-{P}_{\widetilde{m}}{f}\right\Vert
_{n}^{2}+G_{\widetilde{m}}^{2}.  \label{eq12}
\end{align}%
We derive from (\ref{eq9}) and (\ref{eq11}) that for any $\nu >0$ %
\begin{equation*}
\left\Vert {f}-\widetilde{f}\right\Vert _{n}^{2}\leq
\left\Vert {f}-{P}_{m_0}{f}\right\Vert _{n}^{2}+\nu
\left\Vert {f}-{P}_{m_0}{f}\right\Vert _{n}^{2}+\frac{1}{%
\nu }\left\langle {u}_{m_0},{\ {\varepsilon }}%
\right\rangle _{n}^{2}
\end{equation*}%
\begin{equation*}
+\nu \left\Vert {f}-{P}_{\widetilde{m}}{f}\right\Vert
_{n}^{2}+\frac{1}{\nu }\left\langle {u}_{\widetilde{m}},{\
{\varepsilon }}\right\rangle _{n}^{2}+2G_{\widetilde{m}}^{2}+\widehat{pen}\left(
m_0\right) -\widehat{pen}\left( \widetilde{m}\right) .
\end{equation*}%
Now taking into account that by equation (\ref{eq12}) $\left\Vert {f}-%
{P}_{\widetilde{m}}{f}\right\Vert _{n}^{2}=\left\Vert {f}-%
\widetilde{f}\right\Vert _{n}^{2}-G_{\widetilde{m}}^{2}$ the above
inequality is equivalent to %
\begin{equation*}
\left( 1-\nu \right) \left\Vert {f}-\widetilde{f}%
\right\Vert _{n}^{2}\leq \left( 1+\nu \right) \left\Vert {f}-%
{P}_{m_0}{f}\right\Vert _{n}^{2}+\frac{1}{\nu }\left\langle
{u}_{m_0},{\varepsilon }\right\rangle _{n}^{2}
\end{equation*}%
\begin{equation}
+\frac{1}{\nu }\left\langle {u}_{\widetilde{m}},{\varepsilon }%
\right\rangle _{n}^{2}+\left( 2-\nu \right) G_{\widetilde{m}}^{2}+\widehat{pen}\left(
m_0\right) -\widehat{pen}\left( \widetilde{m}\right) .  \label{eq13}
\end{equation}%
We choose $\nu =\frac{2}{2+\eta }\in \left] 0,1\right[ $, but for sake of
simplicity we keep using the notation $\nu $. Let $\widetilde{p}_{1}$ and
$\widetilde{p}_{2}$ be two functions depending on $\nu $ mapping $\mathcal{M%
}$ into $\mathbb{R}_{+}$. They will be specified as in \cite{MR2684389} to satisfy%
\begin{equation}
pen\left( m^{\prime }\right) \geq \left( 2-\nu \right) \widetilde{p}%
_{1}\left( m^{\prime }\right) +\frac{1}{\nu }\widetilde{p}_{2}\left(
m^{\prime }\right) \text{ }\forall (m^{\prime })\in \mathcal{M}_{.}
\label{eq14}
\end{equation}%
Remember that on $\Omega$, $\widehat{pen}(m) \geqslant pen(m)$ $\forall m \in \mathcal{M}$. Since $\frac{1}{\nu }\widetilde{p}_{2}\left( m^{\prime }\right) \leq
pen\left( m^{\prime }\right) $ and $1+\nu \leq 2$, we get from (\ref{eq13}%
) and (\ref{eq14}) that on the set  $\Omega$ %
\begin{align}
\left( 1-\nu \right) \left\Vert {f}-\widetilde{f}%
\right\Vert _{n}^{2}& \leq \left( 1+\nu \right) \left\Vert {f}-%
{P}_{m_0}{f}\right\Vert _{n}^{2}+\widehat{pen}\left( m_0\right) +\frac{1}{%
\nu }\widetilde{p}_{2}\left( m_0\right) +\left( 2-\nu \right) \left( G_{%
\widetilde{m}}^{2}-\widetilde{p}_{1}\left( \widetilde{m}\right) \right)   \notag
\\
& +\frac{1}{\nu }\left( \left\langle {u}_{\widetilde{m}},{%
\varepsilon }\right\rangle _{n}^{2}-\widetilde{p}_{2}\left( \widetilde{m}%
\right) \right) +\frac{1}{\nu }\left( \left\langle {u}_{m_0},{%
\varepsilon }\right\rangle _{n}^{2}-\widetilde{p}_{2}\left( m_0\right) \right)
\notag \\
& \leq 2\left( \left\Vert {f}-{P}_{m_0}{f}\right\Vert
_{n}^{2}+\widehat{pen}\left( m_0\right) \right) +\left( 2-\nu \right) \left( G_{%
\widetilde{m}}^{2}-\widetilde{p}_{1}\left( \widetilde{m}\right) \right)   \notag
\\
& +\frac{1}{\nu }\left( \left\langle {u}_{\widetilde{m}},{%
\varepsilon }\right\rangle _{n}^{2}-\widetilde{p}_{2}\left( \widetilde{m}%
\right) \right) +\frac{1}{\nu }\left( \left\langle {u}_{m_0},{%
\varepsilon }\right\rangle _{n}^{2}-\widetilde{p}_{2}\left( m_0\right) \right)
.  \label{eq15}
\end{align}%

As $\frac{2}{1-\nu }=2+\frac{4}{\eta }$ we obtain that%
\begin{align*}
\left( 1-\nu \right) \mathcal{H}\left( {f}\right)\mathbf{1}_\Omega & =\left\{
\left( 1-\nu \right) \left\Vert {f}-\widetilde{f}%
\right\Vert _{n}^{2}-\left( 1-\nu \right) \left( 2+\frac{4}{\eta }\right)
\left( \left\Vert {f}-%
{P}_{m_0}{f}\right\Vert _{n}^{2}+\widehat{pen}\left( m_0\right) \right) \right\} _{+}\mathbf{1}_\Omega \\
& =\left\{ \left( 1-\nu \right) \left\Vert {f}-\widetilde{{f%
}}\right\Vert _{n}^{2}-2\left( \left\Vert {f}-{P}_{m_0}{f}%
\right\Vert _{n}^{2}+\widehat{pen}\left( m_0\right) \right) \right\} _{+} \mathbf{1}_\Omega\\
& \leq \left\{ \left( 2-\nu \right) \left( G_{\widetilde{m}}^{2}-\widetilde{%
p}_{1}\left( \widetilde{m}\right) \right) +\frac{1}{\nu }\left(
\left\langle {u}_{\widetilde{m}},{\varepsilon }\right\rangle
_{n}^{2}-\widetilde{p}_{2}\left( \widetilde{m}\right) \right) +\frac{1}{\nu
}\left( \left\langle {u}_{m},{\varepsilon }\right\rangle
_{n}^{2}-\widetilde{p}_{2}\left( m_0\right) \right) \right\} _{+}
\end{align*}%

For any $x>0,$%
\begin{align}
\mathbb{P}\left( \left( 1-\nu \right) \mathcal{H}\left( {f}\right)\mathbf{1}_\Omega
\geq \frac{x\delta _{m}^{2}}{n}\right) & \leq \mathbb{P}\left( \exists
m^{\prime }\in \mathcal{M}:\left( 2-\nu \right) \left( G_{m^{\prime
}}^{2}-\widetilde{p}_{1}\left( m^{\prime }\right) \right) \geq \frac{x\delta
_{m^{\prime }}^{2}}{3n}\right)   \notag \\
& +\mathbb{P}\left( \exists m^{\prime }\in \mathcal{M}:\frac{1}{\nu }%
\left( \left\langle {u}_{m^{\prime }},{\varepsilon }%
\right\rangle _{n}^{2}-\widetilde{p}_{2}\left( m^{\prime }\right) \right)
\geq \frac{x\delta _{m^{\prime }}^{2}}{3n}\right)   \notag \\
& \leq \sum\limits_{m^{\prime }\in \mathcal{M}}\mathbb{P}\left( \left(
2-\nu \right) \left( \left\Vert {P}_{m^{\prime }}{%
\varepsilon }\right\Vert _{n}^{2}-\widetilde{p}_{1}\left( m^{\prime }\right)
\right) \geq \frac{x\delta _{m^{\prime }}^{2}}{3n}\right)   \notag \\
& +\sum\limits_{m^{\prime }\in \mathcal{M}}\mathbb{P}\left( \frac{1}{\nu }%
\left( \left\langle {u}_{m^{\prime }},{\varepsilon }%
\right\rangle _{n}^{2}-\widetilde{p}_{2}\left( m^{\prime }\right) \right)
\geq \frac{x\delta _{m^{\prime }}^{2}}{3n}\right)   \notag \\
& :=\sum\limits_{m^{\prime }\in \mathcal{M}}P_{1,m^{\prime }}\left( x\right)
+\sum\limits_{m^{\prime }\in \mathcal{M}}P_{2,m^{\prime }}\left( x\right) .
\label{eq16}
\end{align}%

From now on, the proof of Lemma \ref{lem} is exactly the same as the end of the proof of Proposition \ref{ExtTh3.1Baraud} in \cite{MR2684389} with $L_m=\nu$.
\end{proof}

\begin{proof}{\textbf{Proposition \ref{casgen}}}

We first provide an upper bound for $\E \left[ \left\Vert f- \widetilde{f} \right \Vert^{2q}_n \mathbf{1}_{\Omega} \right]$, where the set $\Omega$ depends on $\alpha$ chose as in Lemma \ref{lem}.

As $q \leqslant 1$, we have $ \left( a + b \right) ^q \leqslant a^q + b^q$. Together with Lemma \ref{lem} we deduce that
$$\E \left[ \left\Vert f- \widetilde{f} \right \Vert^{2q}_n \mathbf{1}_{\Omega} \right]
\leqslant
\Delta^q_ \kappa \delta^{2q}_{sup} \frac{1}{n^q} + \E \left[ \tilde{\kappa} \left( \theta \right)^q  \left[ \left\Vert f- f_{m_0}\right\Vert^2_n + \frac{D_{m_0}}{n}\widehat{\delta}_{m_0} ^2 \right]^q \right].$$
Using the convexity of $x \mapsto x^\frac{1}{q}$ together with the Jensen inequality, we obtain
$$\left(\E \left[ \left\Vert f- \widetilde{f} \right \Vert^{2q}_n \mathbf{1}_{\Omega} \right]\right)^\frac{1}{q}
\leqslant
2^{1/q -1}  \Delta_ \kappa \delta^2_{sup} \frac{1}{n} + 2^{1/q -1} \E \left[ \tilde{\kappa} \left( \theta \right)  \left[ \left\Vert f- f_{m_0}\right\Vert^2_n + \frac{D_{m_0}}{n}\widehat{\delta}_{m_0} ^2 \right]\right],$$
and by using the assumption A1 we have that
\begin{equation}
\label{p1}
\left(\E \left[ \left\Vert f- \widetilde{f} \right \Vert^{2q}_n \mathbf{1}_{\Omega} \right]\right)^\frac{1}{q}
\leqslant2^{1/q -1} \Delta_ \kappa \delta^2_{sup} \frac{1}{n} + 2^{1/q -1} \tilde{\kappa} \left( \theta \right)  \left[ \left\Vert f- f_{m_0}\right\Vert^2_n + \frac{D_{m_0}}{n}{\delta}_{m_0} ^2 \right].
\end{equation}

Now we need to find an upper bound for the quantity
$\E \left[ \left\Vert f- \widetilde{f} \right \Vert^{2q}_n \mathbf{1}_{\Omega^c} \right].$

First, remark that 
$$\left\Vert f- \widetilde{f} \right \Vert^{2}_n = \left\Vert f- P_{\widetilde{m}}y \right \Vert^{2}_n=\left\Vert f- P_{\widetilde{m}}f \right \Vert^{2}_n +\left\Vert P_{\widetilde{m}}\left(f-y\right) \right \Vert^{2}_n$$
$$\leqslant\left\Vert f- P_{\widetilde{m}}f \right \Vert^{2}_n +\left\Vert \varepsilon \right \Vert^{2}_n= \left\Vert f\right \Vert^{2}_n-\left\Vert P_{\widetilde{m}}f \right \Vert^{2}_n +\left\Vert \varepsilon \right \Vert^{2}_n$$
And thus
$$\left\Vert f- \widetilde{f} \right \Vert^{2}_n \leqslant \left\Vert f\right \Vert^{2}_n +\left\Vert \varepsilon \right \Vert^{2}_n .$$
So we have
$$\E \left[ \left\Vert f- \widetilde{f} \right \Vert^{2q}_n \mathbf{1}_{\Omega^c} \right] \leqslant \left\Vert f\right \Vert^{2q}_n \mathbb{P}\left( \Omega^c \right) +\E \left[ \left\Vert \varepsilon \right \Vert^{2q}_n \mathbf{1}_{\Omega^c} \right].$$
Using H\"older's inequality with $\frac{ \kappa}{2q} > 1$ we obtain   
$$\E \left[ \left\Vert \varepsilon \right \Vert^{2q}_n \mathbf{1}_{\Omega^c} \right] \leqslant \E \left[ \left\Vert \varepsilon \right \Vert^ \kappa_n\right]^\frac{2q}{ \kappa} \mathbb{P}\left(\Omega^c \right)^{\left(1-\frac{2q}{ \kappa}\right)}. $$

But  $$\E \left[\left\Vert \varepsilon \right\Vert^ \kappa_n \right] = \frac{1}{n^\frac{ \kappa}{2}} \E\left[ \left( \sum_{i=1}^n \left\Vert \varepsilon_i \right\Vert^2 \right)^\frac{ \kappa}{2} \right],$$
and as $ \kappa \geqslant 2$, we can use Minkowsky's inequality to obtain
$$\E \left[\left\Vert \varepsilon \right\Vert^ \kappa_n \right] \leqslant\frac{1}{n^\frac{ \kappa}{2}} \left( \sum_{i=1}^n \left( E \left[ \left\Vert \varepsilon_i \right\Vert^ \kappa \right]  \right)^\frac{2}{ \kappa} \right)^\frac{ \kappa}{2} = \frac{1}{n^\frac{ \kappa}{2}} \left( n\left( E \left[ \left\Vert \varepsilon_1 \right\Vert^ \kappa \right]  \right)^\frac{2}{ \kappa} \right)^\frac{ \kappa}{2},$$
that is
$$\E \left[\left\Vert \varepsilon \right\Vert^ \kappa_n \right] \leqslant \E \left[ \left\Vert \varepsilon_1 \right\Vert^ \kappa\right].$$
So we have 
$$\E \left[ \left\Vert f- \widetilde{f} \right \Vert^{2q}_n \mathbf{1}_{\Omega^c} \right] \leqslant \left[ \E \left[ \left\Vert \varepsilon_1 \right\Vert^ \kappa\right]^\frac{2q}{ \kappa}
 + \left\Vert f\right \Vert^{2q}_n \right] \mathbb{P}\left(\Omega^c \right)^{\left(1-\frac{2q}{ \kappa}\right)},$$
 and with assumption A2
 $$\E \left[ \left\Vert f- \widetilde{f} \right \Vert^{2q}_n \mathbf{1}_{\Omega^c} \right] \leqslant \left[ \E \left[ \left\Vert \varepsilon_1 \right\Vert^ \kappa\right]^\frac{2q}{ \kappa}
 + \left\Vert f\right \Vert^{2q}_n \right] \left(  \tilde{C}(\alpha)\frac{1}{n^{\gamma}} \right)^{\left(1-\frac{2q}{ \kappa}\right)}.$$
 As $\gamma \geqslant\frac{q}{1- 2q/ \kappa}$, we deduce that
 \begin{equation}
 \label{p2}
 \left(\E \left[ \left\Vert f- \widetilde{f} \right \Vert^{2q}_n \mathbf{1}_{\Omega^c} \right]\right)^ \frac{1}{q} \leqslant 2^{\frac{1}{q}- 1} \left[ \E \left[ \left\Vert \varepsilon_1 \right\Vert^ \kappa\right]^\frac{2}{ \kappa}
 + \left\Vert f\right \Vert^{2}_n \right]   \tilde{C}(\alpha)^{\frac{1-\frac{2q}{ \kappa}}{q}}\frac{1}{n}
 \end{equation}
To conclude, we use again the convexity of $x \mapsto x^\frac{1}{q}$ and the inequality \eqref{p1} to get 
\begin{align*}
 \left(\E \left[ \left\Vert f- \widetilde{f} \right \Vert^{2q}_n \right]\right)^ \frac{1}{q} \leqslant 4^{\frac{1}{q}- 1} \left[ \E \left[ \left\Vert \varepsilon_1 \right\Vert^ \kappa\right]^\frac{2}{ \kappa} 
 + \left\Vert f\right \Vert^{2}_n \right]   & \tilde{C}(\alpha)^{\frac{1-\frac{2q}{ \kappa}}{q}}\frac{1}{n}\\ + 4^{1/q -1} \Delta_ \kappa \delta^2_{sup} \frac{1}{n} \\ +4^{1/q -1} \tilde{\kappa} \left( \theta \right)  \left[ \left\Vert f- f_{m_0}\right\Vert^2_n + \frac{D_{m_0}}{n}{\delta}_{m_0} ^2 \right] 
 \end{align*}
\end{proof}

\subsection{Proof of Theorem \ref{theochili} }
Recall that $\beta > \max \left( 2\left( 1+2q\right), 3+2q\right)$ and  $ \kappa \in  \left] 2\left( 1+2q\right) ; \min\left( \beta, 2 \beta -4\right) \right[$.
 In order to use Proposition \ref{casgen} , we need to prove the following inequalities :
\begin{enumerate}
\item[ A1.] $\E \left[ \widehat{\delta}_m^2 \right] \leqslant \delta_m^2$
\item[ A2.]$\mathbb{P} \left( \Omega^ c \right) \leqslant \tilde{C}\left( \alpha \right)\frac{1}{n^\gamma}$ for $\gamma \geqslant \frac{q}{1- 2q/\kappa}$. %
 \end{enumerate}

First we prove A1.

Remember that $\widehat{\delta} _{m}^{2} =\frac{\mathrm{Tr}\left( \left( {\Pi }_{m}\otimes 
{\Pi }_{m}\right) {\widehat{\Phi} }\right) }{D_{m}}$. By using the linearity of the trace and the equality $\E\left[ \widehat{\Phi} \right] = \frac{n-1}{n} \Phi$, we obtain that $\E\left[ \widehat{\delta} _{m}^{2}\right] = \frac{n-1}{n} {\delta} _{m}^{2}$ which proves the result.
\\

For the second, write $\Omega^c = \cup_{m \in \mathcal{M}} \left\lbrace\widehat{\delta}_m^2 \leqslant \left(1 - \alpha \right)\delta_m^2 \right\rbrace$.
We bound up the quantity $\mathbb{P}\left(\widehat{\delta}_m^2 \leqslant \left(1 - \alpha \right)\delta_m^2 \right)$ in the following Proposition.

\begin{prop}
\label{lemchili}
For all $m \in \mathcal{M}$ , $\alpha \in ]0;1[$ and $n\geqslant n(\kappa,\beta,\alpha,C_{inf},\Sigma)$  we have for some constants $C_1\left( \beta \right)$, $C_2\left( \beta \right)$ :
$$\mathbb{P}\left(\widehat{\delta}_m^2 \leqslant \left(1 - \alpha \right)\delta_m^2 \right)\leqslant \frac{1}{n^{\gamma}} \left(
C_2(\beta) 2 ^ {\beta  + 1} +  C_1 \left( \beta \right) \frac{1 }{\alpha^\frac{\beta}{2} }\right)\E \left[ \left\Vert x x^\top  \right\Vert^\beta \right] \delta_m^{-\beta} D_m^{-\frac{\beta}{2}},$$
for $\gamma \geqslant \frac{q}{1- 2q/\kappa}$.
\end{prop}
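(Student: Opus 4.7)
The plan is to express $\widehat{\delta}_m^2$ in terms of centered random vectors, split the ``bad'' event into two sub-events, and bound each separately by a combination of Proposition~\ref{proBaraud} (for an ancillary quadratic form) and Markov's inequality together with a moment bound for sums of i.i.d.\ variables.

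First I would set $u_i = y_i - vec(\Sigma)$, so that the $u_i$ are i.i.d., centred, with $V(u_i)=\Phi$. Expanding $\widehat{\Phi}=\frac{1}{n}\sum y_iy_i^\top - S_{vec}S_{vec}^\top$ via $y_i=u_i+vec(\Sigma)$ produces, after cancellation, $\widehat{\Phi}=\frac{1}{n}\sum_{i=1}^n u_iu_i^\top - \bar u\bar u^\top$ with $\bar u = \frac{1}{n}\sum_i u_i$. Writing $A=\Pi_m\otimes\Pi_m$, this gives $\widehat{\delta}_m^2 D_m = \frac{1}{n}\sum_{i=1}^n u_i^\top A u_i - \bar u^\top A\bar u$, where $\E[u_1^\top A u_1]=\delta_m^2 D_m$. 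I would then use the inclusion $\{\widehat{\delta}_m^2\le (1-\alpha)\delta_m^2\}\subseteq\{\bar u^\top A\bar u\ge \tfrac{\alpha}{2}\delta_m^2 D_m\}\cup\{\tfrac{1}{n}\sum_i u_i^\top A u_i\le (1-\tfrac{\alpha}{2})\delta_m^2 D_m\}$ to reduce to two sub-events.

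For the first sub-event I would recognise $\bar u^\top A\bar u$ as the quadratic form $u^\top\widetilde A u$ with $u=(u_1^\top,\dots,u_n^\top)^\top$ and $\widetilde A=\frac{1}{n^2}(J_n\otimes A)$, $J_n$ being the all-ones $n\times n$ matrix. Direct computation gives $\mathrm{Tr}(\widetilde A)=D_m/n$, $\mathrm{Tr}(\widetilde A(I_n\otimes\Phi))=\delta_m^2 D_m/n$, $\rho(\widetilde A)=1/n$, so that the $\delta^2$ of Proposition~\ref{proBaraud} equals $\delta_m^2$. Choosing $x$ of order $\alpha n D_m$ renders the three-term threshold $\delta^2\mathrm{Tr}(\widetilde A)+2\delta^2\sqrt{\mathrm{Tr}(\widetilde A)\rho(\widetilde A)x}+\delta^2\rho(\widetilde A)x$ at most $\tfrac{\alpha}{2}\delta_m^2 D_m$ as soon as $n\ge n(\alpha,C_{\inf},\Sigma)$; Proposition~\ref{proBaraud} then yields a bound proportional to $C_2(\beta)\E\|u_1\|^\beta/(n^{\beta/2}\alpha^{\beta/2}\delta_m^\beta D_m^{\beta/2-1})$, which (after absorbing the residual $D_m$ factor, cf.\ below) becomes the first summand of the claim. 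For the second sub-event, set $W_i=u_i^\top A u_i-\delta_m^2 D_m$ (i.i.d., centred); it is contained in $\{|\sum_i W_i|\ge n\alpha\delta_m^2 D_m/2\}$, and Markov's inequality at exponent $\beta/2$, combined with Lemma~\ref{paraderose} (von Bahr--Esseen) when $\beta/2\le 2$ and with a Rosenthal / Marcinkiewicz--Zygmund extension when $\beta/2>2$, produces a bound $\lesssim \E|W_1|^{\beta/2}/(n^{\gamma'}\alpha^{\beta/2}\delta_m^\beta D_m^{\beta/2})$ for an appropriate power $\gamma'$. The crude estimate $|W_1|\le \|u_1\|^2+\delta_m^2 D_m\lesssim \|u_1\|^2$ yields $\E|W_1|^{\beta/2}\lesssim \E\|u_1\|^\beta$, and $\|u_1\|\le \|y_1\|+\E\|y_1\|\le 2\|y_1\|=2\|x_1 x_1^\top\|$ gives $\E\|u_1\|^\beta\le 2^\beta\E\|xx^\top\|^\beta$, delivering the $C_1(\beta)/\alpha^{\beta/2}$ contribution. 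Summing the two and requiring $n$ large enough to absorb sub-dominant powers produces the stated inequality.

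The main obstacle lies in the second sub-event: Lemma~\ref{paraderose} is restricted to exponents in $[1,2]$, whereas $\beta/2$ can exceed $2$ (indeed $\beta$ may be as large as $\sim 6$ when $q$ approaches $1$), so one needs a Rosenthal / Marcinkiewicz--Zygmund type inequality for $\E|\sum_i W_i|^{\beta/2}$ that is standard but not explicitly stated in the excerpt. A secondary technical point is reconciling the exponent of $D_m$: the direct application of Proposition~\ref{proBaraud} to $\bar u^\top A\bar u$ naturally yields $D_m^{-\beta/2+1}$ rather than the claimed $D_m^{-\beta/2}$, so the extra factor of $D_m$ must be absorbed using that $D_m\le p^2$ is bounded and that $n\ge n(\beta,\theta,C_{\inf},\Sigma)$ carries more than enough slack. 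Verifying that the resulting decay exponent $\gamma'$ from each contribution is at least $q/(1-2q/\kappa)$ follows from the hypotheses $\beta>\max(2(1+2q),3+2q)$ and $\kappa\in(2(1+2q),\min(\beta,2\beta-4))$ via an elementary algebraic check.
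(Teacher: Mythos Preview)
Your proposal is correct and follows the same overall architecture as the paper (split the event in two, treat one piece by Proposition~\ref{proBaraud} and the other by Markov plus a Rosenthal/von Bahr--Esseen moment bound), but your decomposition is cleaner than the paper's. By centering at the outset via $u_i=y_i-\mu$ and writing $\widehat{\delta}_m^2 D_m=\tfrac1n\sum_i u_i^\top A u_i-\bar u^\top A\bar u$, you isolate the quadratic form $\bar u^\top A\bar u$ directly. The paper instead keeps $y_i$ and splits into $Q_1=\mathbb{P}\big(|\mathrm{Tr}(A\,\tfrac1n\sum_i(y_iy_i^\top-\Phi-\mu\mu^\top))|\ge\tfrac\alpha2\delta_m^2 D_m\big)$ and $Q_2=\mathbb{P}\big(|\,\|A\mu\|^2-\|AS_{vec}\|^2\,|\ge\tfrac\alpha2\delta_m^2 D_m\big)$; the latter is then further decomposed via $|a^2-b^2|\le\|A\bar u\|^2+2\|\mu\|\,\|A\bar u\|$, and the cross term forces an additional application of Proposition~\ref{proBaraud} together with the lower bound $C_{\inf}$ on $\mathrm{Tr}(A\Phi)$ and the dependence on $\|\Sigma\|$ in the threshold $n(\kappa,\beta,\alpha,C_{\inf},\Sigma)$. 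Your route sidesteps this entirely; the only price is that both of your terms carry a factor $\alpha^{-\beta/2}$, whereas the paper's $Q_2$-contribution is $\alpha$-free (that $\alpha$ is hidden in $n(\cdot)$ instead). Since the statement allows arbitrary constants $C_1(\beta),C_2(\beta)$, this is immaterial.

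Two small technical remarks. First, for the stray $D_m$ in your first sub-event you should use $D_m\le n$ (as the paper does) rather than $D_m\le p^2$: the former costs only one power of $n$, and since your raw rate is $n^{-\beta/2}$ while the target is $n^{-\kappa/4}$, the condition $\kappa<2\beta-4$ gives exactly the needed slack and keeps the constants $\beta$-only. Second, the line ``$|W_1|\le\|u_1\|^2+\delta_m^2 D_m\lesssim\|u_1\|^2$'' is not literally true pointwise; what you need (and what works) is the moment bound $\mathbb{E}|W_1|^{\beta/2}\le 2^{\beta/2-1}\big(\mathbb{E}\|u_1\|^\beta+(\delta_m^2 D_m)^{\beta/2}\big)\le 2^{\beta/2}\,\mathbb{E}\|u_1\|^\beta$, using $\delta_m^2 D_m=\mathbb{E}[u_1^\top A u_1]\le\mathbb{E}\|u_1\|^2\le(\mathbb{E}\|u_1\|^\beta)^{2/\beta}$.
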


This Proposition concludes the proof of A2 with $$\tilde{C}\left(\alpha \right) = \left(
C_2(\beta) 2 ^ {\beta  + 1} +  C_1 \left( \beta \right) \frac{1 }{\alpha^\frac{\beta}{2} }\right)\E \left[ \left\Vert x x^\top  \right\Vert^\beta \right] \sum_{m \in \mathcal{M}} \delta_m^{-\beta} D_m^{-\frac{\beta}{2}}.$$
\begin{proof}{\textbf{Proposition~\ref{lemchili}}}
%
%

We start by dividing $\mathcal{P}_m=\mathbb{P} \left( \widehat{\delta}_m^2 \leqslant \left(1 - \alpha \right)\delta_m^2 \right)$ into two parts with one of them involving a sum of independent variables with expectation equal to 0.
$$\mathcal{P}_m
= \mathbb{P} \left( \mathrm{Tr}\left( \left( {\Pi }_{m}\otimes 
{\Pi }_{m}\right) {\widehat{\Phi} }\right) \leqslant \left(1 - \alpha \right)\mathrm{Tr}\left( \left( {\Pi }_{m}\otimes 
{\Pi }_{m}\right) {{\Phi} }\right) \right)$$

$$ \mathcal{P}_m= \mathbb{P} \left( \mathrm{Tr}\left( \left( {\Pi }_{m}\otimes 
{\Pi }_{m}\right) \left(\widehat{\Phi} -\left(\Phi+ \mu \mu^\top \right) + \mu \mu ^\top\right)\right) \leqslant  - \alpha \mathrm{Tr}\left( \left( {\Pi }_{m}\otimes 
{\Pi }_{m}\right) {{\Phi} } \right)\right)$$

$$\mathcal{P}_m\leqslant\mathbb{P} \left( \left\vert \mathrm{Tr}\left( \left( {\Pi }_{m}\otimes 
{\Pi }_{m}\right) \left(\frac{1}{n}\sum_{i=1}^n \left(y_i y_i^T -\Phi - \mu \mu^ \top \right) + \mu \mu ^\top - S_{vec} S_{vec}^\top\right) \right) \right\vert \geqslant   \alpha \mathrm{Tr}\left( \left( {\Pi }_{m}\otimes 
{\Pi }_{m}\right){{\Phi} }\right) \right)$$

$$\mathcal{P}_m\leqslant\mathbb{P} \left( \left\vert \mathrm{Tr}\left( \left( {\Pi }_{m}\otimes 
{\Pi }_{m}\right) \left(\frac{1}{n}\sum_{i=1}^n \left(y_i y_i^T -\Phi - \mu \mu^ \top \right) \right) \right) \right\vert \geqslant   \frac{\alpha}{2} \mathrm{Tr}\left( \left( {\Pi }_{m}\otimes 
{\Pi }_{m}\right){{\Phi} }\right) \right)$$
$$+\mathbb{P} \left( \left\vert \mathrm{Tr}\left( \left( {\Pi }_{m}\otimes 
{\Pi }_{m}\right) \left( \mu \mu ^\top - S_{vec} S_{vec}^\top\right) \right) \right\vert \geqslant  \frac{ \alpha}{2} \mathrm{Tr}\left( \left( {\Pi }_{m}\otimes 
{\Pi }_{m}\right){{\Phi} }\right) \right)$$

Set 
$$Q1=\mathbb{P} \left( \left\vert \mathrm{Tr}\left( \left( {\Pi }_{m}\otimes 
{\Pi }_{m}\right) \left(\frac{1}{n}\sum_{i=1}^n \left(y_i y_i^T -\Phi - \mu \mu^ \top \right) \right) \right) \right\vert \geqslant   \frac{\alpha}{2} \mathrm{Tr}\left( \left( {\Pi }_{m}\otimes 
{\Pi }_{m}\right){{\Phi} }\right) \right)$$ and
$$Q2=\mathbb{P} \left( \left\vert \mathrm{Tr}\left( \left( {\Pi }_{m}\otimes 
{\Pi }_{m}\right) \left( \mu \mu ^\top - S_{vec} S_{vec}^\top\right) \right) \right\vert \geqslant  \frac{ \alpha}{2} \mathrm{Tr}\left( \left( {\Pi }_{m}\otimes 
{\Pi }_{m}\right){{\Phi} }\right) \right) $$

\textbf{Study of Q1}

First we use Markov's inequality to obtain 
$$A1 \leqslant \frac{2^\frac{\beta }{2}\E \left[\left\vert \mathrm{Tr}\left( \left( {\Pi }_{m}\otimes 
{\Pi }_{m}\right) \left(\frac{1}{n}\sum_{i=1}^n \left(y_i y_i^T -\Phi - \mu \mu^ \top \right)\right) \right) \right\vert^\frac{\beta}{2} \right]}{\left(\alpha \mathrm{Tr}\left( \left( {\Pi }_{m}\otimes 
{\Pi }_{m}\right){\Phi}\right)\right) ^\frac{\beta}{2}}.$$

We must consider the two following cases :
\begin{itemize}

\item If $\frac{\beta}{2} \geqslant 2$, Rosenthal's inequality gives 
\begin{align*}\E &\left[\left\vert \frac{1}{n}\sum_{i=1}^n\mathrm{Tr}\left( \left( {\Pi }_{m}\otimes 
{\Pi }_{m}\right) \left( \left(y_i y_i^T -\Phi - \mu \mu^ \top \right)\right) \right) \right\vert^\frac{\beta}{2} \right]\\ \leqslant &
C \left( \frac{\beta}{2} \right) \frac{1}{n^{\frac{\beta}{2} - 1}} \E\left[ \left\vert \mathrm{Tr}\left( \left( {\Pi }_{m}\otimes 
{\Pi }_{m}\right) \left({y_1 y_1^\top -\Phi - \mu \mu^ \top }\right) \right) \right\vert ^\frac{\beta}{2} \right]   \\ 
+  C \left( \frac{\beta}{2} \right)&\left( \frac{1}{n}\E\left[ \left\vert \mathrm{Tr}\left( \left( {\Pi }_{m}\otimes 
{\Pi }_{m}\right) \left({y_1y_1^\top - \Phi - \mu \mu^ \top }\right) \right) \right\vert ^2 \right]\right)^\frac{\beta}{4}.
\end{align*}

As $\frac{\beta}{2}\geqslant 2$, $\frac{1}{n^{\frac{\beta}{2}- 1}} \leqslant \frac{1}{n^ \frac{\beta}{4}}$ and we can use Jensen's inequality on the second term to obtain
$$\E \left[\left\vert \frac{1}{n}\sum_{i=1}^n\mathrm{Tr}\left( \left( {\Pi }_{m}\otimes 
{\Pi }_{m}\right) \left( \left(y_i y_i^T -\Phi - \mu \mu^ \top \right)\right) \right) \right\vert^\frac{\beta}{2} \right]$$ 
$$\leqslant C\left(\frac{\beta}{2} \right)\E\left[ \left\vert \mathrm{Tr}\left( \left( {\Pi }_{m}\otimes 
{\Pi }_{m}\right) \left(y_1y_1^\top - \Phi - \mu \mu^ \top \right) \right) \right\vert ^\frac{\beta}{2} \right] \frac{2}{n^\frac{\beta}{4}}.$$

\item If $1 \leqslant \frac{\beta}{2} \leqslant 2$, we use Lemma \ref{paraderose} of  subsection \ref{auxlem} to get 
\begin{align*}\E &\left[\left\vert \frac{1}{n}\sum_{i=1}^n\mathrm{Tr}\left( \left( {\Pi }_{m}\otimes 
{\Pi }_{m}\right) \left( \left(y_i y_i^T -\Phi - \mu \mu^ \top \right)\right) \right) \right\vert^\frac{\beta}{2} \right]\\ \leqslant & \frac{8}{n^{\frac{\beta}{2}-1 }} \E\left[ \left\vert \mathrm{Tr}\left( \left( {\Pi }_{m}\otimes 
{\Pi }_{m}\right) \left({y_1 y_1^\top -\Phi - \mu \mu^ \top }\right) \right) \right\vert ^\frac{\beta}{2} \right] 
\end{align*}
\end{itemize}
In both cases, we can use the fact that $x \mapsto x^\frac{\beta}{2}$ is a convex and increasing function to obtain
$$\E\left[ \left\vert \mathrm{Tr}\left( \left( {\Pi }_{m}\otimes 
{\Pi }_{m}\right) \left(y_1y_1^\top - \Phi - \mu \mu^ \top \right) \right) \right\vert ^\frac{\beta}{2} \right]$$
$$\leqslant 2^{\frac{\beta}{2}-1}\left[ \E\left[ \left\vert \mathrm{Tr}\left( \left( {\Pi }_{m}\otimes 
{\Pi }_{m}\right) \left(y_1 y_1^\top \right) \right) \right\vert^\frac{\beta}{2} \right] + \left\vert \mathrm{Tr}\left( \left( {\Pi }_{m}\otimes 
{\Pi }_{m}\right) \left( \Phi +  \mu \mu^ \top \right)\right)\right\vert ^\frac{\beta}{2} \right] .$$
And by using the Jensen's inequality on the second term we have that
\begin{align*}
\E & \left[ \left\vert \mathrm{Tr}\left( \left( {\Pi }_{m}\otimes 
{\Pi }_{m}\right) \left(y_1y_1^\top - \Phi - \mu \mu^ \top \right) \right) \right\vert ^\frac{\beta}{2} \right] \\ &\leqslant 2^\frac{\beta}{2} \E\left[ \left\vert \mathrm{Tr}\left( \left( {\Pi }_{m}\otimes 
{\Pi }_{m}\right) \left(y_1 y_1^\top  \right) \right) \right\vert^\frac{\beta}{2} \right].
\end{align*}

Now consider the following lemma.
\begin{lem}
\label{evident}
If $\Psi$ is  symmetric non-negative definite, then 
\begin{equation}
\mathrm{Tr}\left( \left( {\Pi }_{m}\otimes 
{\Pi }_{m}\right) \Psi \right) \in \left[ 0; \mathrm{Tr}\left(\Psi \right)\right]
 \end{equation}
\end{lem}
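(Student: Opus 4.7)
The plan is to observe that $\Pi_m\otimes\Pi_m$ is itself an orthogonal projection matrix, since $\Pi_m$ is symmetric and idempotent and both properties pass through the Kronecker product: $(\Pi_m\otimes\Pi_m)^\top=\Pi_m^\top\otimes\Pi_m^\top=\Pi_m\otimes\Pi_m$ and $(\Pi_m\otimes\Pi_m)^2=\Pi_m^2\otimes\Pi_m^2=\Pi_m\otimes\Pi_m$. Call this projection $P$ for brevity.

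For the lower bound, I would factor $\Psi=\Psi^{1/2}\Psi^{1/2}$ using the symmetric nonnegative square root (which exists because $\Psi$ is symmetric and nonnegative definite). Then by the cyclic property of the trace and the fact that $P=P^\top P$ (being an orthogonal projection),
\[
\mathrm{Tr}(P\Psi)=\mathrm{Tr}(\Psi^{1/2}P\Psi^{1/2})=\mathrm{Tr}(\Psi^{1/2}P^\top P\Psi^{1/2})=\|P\Psi^{1/2}\|^2\geq 0,
\]
which immediately gives the nonnegativity claim.

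For the upper bound, the key observation is that $I-P$ is also an orthogonal projection. Writing $I=P+(I-P)$ and using linearity of the trace,
\[
\mathrm{Tr}(\Psi)=\mathrm{Tr}(P\Psi)+\mathrm{Tr}((I-P)\Psi),
\]
and applying exactly the same square-root argument to $(I-P)$ shows that $\mathrm{Tr}((I-P)\Psi)\geq 0$, so $\mathrm{Tr}(P\Psi)\leq\mathrm{Tr}(\Psi)$. There is no real obstacle here: the only thing to be careful about is to justify using the symmetric square root of $\Psi$, which is standard since $\Psi$ is symmetric nonnegative definite, and to verify that the Kronecker product of projections is a projection, which is a one-line computation.
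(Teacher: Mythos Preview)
Your argument is correct. The paper's proof also starts from the fact that $\Pi_m\otimes\Pi_m$ is an orthogonal projection, but instead of using the square root of $\Psi$ it diagonalizes the projection: writing $P_m^\top(\Pi_m\otimes\Pi_m)P_m=D$ with $D$ diagonal of zeros and ones, it computes $\mathrm{Tr}((\Pi_m\otimes\Pi_m)\Psi)=\sum_{l=1}^{D_m}(P_m^\top\Psi P_m)_{ll}$ and concludes by noting that all diagonal entries of the nonnegative definite matrix $P_m^\top\Psi P_m$ are nonnegative. Your route via $\Psi^{1/2}$ and the complementary projection $I-P$ is equally short and avoids choosing an orthonormal basis; the paper's route makes the partial-sum-of-diagonal-entries structure explicit. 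Both are standard one-line arguments for the inequality $0\le\mathrm{Tr}(P\Psi)\le\mathrm{Tr}(\Psi)$ when $P$ is an orthogonal projection and $\Psi$ is nonnegative definite.
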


From this fact we get that
$$\left\vert \mathrm{Tr}\left( \left( {\Pi }_{m}\otimes 
{\Pi }_{m}\right) \left(y_1 y_1^\top  \right) \right) \right\vert^\frac{\beta}{2} \leqslant \mathrm{Tr}\left( y_1 y_1^\top\right) ^\frac{\beta}{2} = \left\Vert y_1 \right\Vert^ {\beta} = \left\Vert x x^ \top\right\Vert^ {\beta}. $$

In conclusion, we have
\begin{equation}
\label{eqA1}
Q1 \leqslant C_1 \left( \beta \right)\frac{\E\left[ \left\Vert x x^ \top\right\Vert^ {\beta}\right] }{\alpha ^\frac{\beta}{2} \delta_m^{\beta} D_m^\frac{\beta}{2}} \frac{1}{n^{\gamma}},
\end{equation}
with ${\gamma} =\min\left(\frac{\beta}{4}, \frac{\beta}{2} -1 \right)$ and $C_1\left(\beta\right)=2C\left( \frac{\beta}{2}\right)$ if $\beta\geqslant 4$ where $C\left( \frac{\beta}{2}\right)$ is the constant in Rosenthal's inequality and $C_1\left(\beta\right)=8$ if $2\leqslant \beta \leqslant 4$. Remark that $ \frac{\beta}{4} \geqslant \frac{\kappa}{4}$ and $\frac{\beta}{2} -1 \geqslant \frac{\kappa}{4}$, so ${\gamma} \geqslant  \frac{\kappa}{4}$. 
\\

\textbf{Study of Q2}

Recall that $$ Q2=\mathbb{P} \left( \left\vert \mathrm{Tr}\left( \left( {\Pi }_{m}\otimes 
{\Pi }_{m}\right) \left( \mu \mu ^\top - S_{vec} S_{vec}^\top\right) \right) \right\vert \geqslant  \frac{ \alpha}{2} \mathrm{Tr}\left( \left( {\Pi }_{m}\otimes 
{\Pi }_{m}\right){{\Phi} }\right) \right).$$
Set $$B2= \mathrm{Tr}\left( \left( {\Pi }_{m}\otimes 
{\Pi }_{m}\right) \left( \mu \mu ^\top - S_{vec} S_{vec}^\top\right) \right).$$
Using the properties of the trace, we can write $$B2 =
\mathrm{Tr}\left( \left( {\Pi }_{m}\otimes 
{\Pi }_{m}\right) \left( \mu \mu ^\top\right)\right) - \mathrm{Tr}\left( \left( {\Pi }_{m}\otimes 
{\Pi }_{m}\right) \left(S_{vec} S_{vec}^\top\right) \right)$$

$$=\mathrm{Tr}\left(\mu ^\top \left( {\Pi }_{m}\otimes 
{\Pi }_{m}\right)  \mu \right) - \mathrm{Tr}\left( S_{vec}^\top\left( {\Pi }_{m}\otimes 
{\Pi }_{m}\right) S_{vec}  \right).
$$

But  ${\Pi }_{m}\otimes 
{\Pi }_{m}$ is an orthogonal projection matrix, then 

$$B2=\mathrm{Tr}\left(\mu ^\top \left( {\Pi }_{m}\otimes 
{\Pi }_{m} \right)^\top\left( {\Pi }_{m}\otimes 
{\Pi }_{m}\right)  \mu \right) - \mathrm{Tr}\left( S_{vec}^\top\left( {\Pi }_{m}\otimes 
{\Pi }_{m} \right)^\top\left( {\Pi }_{m}\otimes 
{\Pi }_{m}\right) S_{vec}  \right)$$
$$B2=\left\Vert \left( {\Pi }_{m}\otimes 
{\Pi }_{m} \right)\mu \right\Vert^2  - \left\Vert \left( {\Pi }_{m}\otimes 
{\Pi }_{m} \right)S_{vec} \right\Vert^2$$
$$B2= \left(\left\Vert \left( {\Pi }_{m}\otimes 
{\Pi }_{m} \right)\mu \right\Vert  - \left\Vert \left( {\Pi }_{m}\otimes 
{\Pi }_{m} \right)S_{vec} \right\Vert\right)\left(\left\Vert \left( {\Pi }_{m}\otimes 
{\Pi }_{m} \right)\mu \right\Vert  + \left\Vert \left( {\Pi }_{m}\otimes 
{\Pi }_{m} \right)S_{vec} \right\Vert\right)$$

Hence 
$$\left\vert B2 \right\vert
\leqslant\left\Vert \left( {\Pi }_{m}\otimes 
{\Pi }_{m} \right)\left(\mu -S_{vec}\right) \right\Vert\left(\left\Vert \left( {\Pi }_{m}\otimes 
{\Pi }_{m} \right)\mu \right\Vert  + \left\Vert \left( {\Pi }_{m}\otimes 
{\Pi }_{m} \right)S_{vec} \right\Vert\right)$$
$$\left\vert B2 \right\vert
\leqslant\left\Vert \left( {\Pi }_{m}\otimes 
{\Pi }_{m} \right)\left(\mu -S_{vec}\right) \right\Vert^2+2\left\Vert \left( {\Pi }_{m}\otimes 
{\Pi }_{m} \right)\left(\mu -S_{vec} \right)\right\Vert\left\Vert \left( {\Pi }_{m}\otimes 
{\Pi }_{m} \right)\mu  \right\Vert$$
$$\left\vert B2 \right\vert
\leqslant\left\Vert \left( {\Pi }_{m}\otimes 
{\Pi }_{m} \right)\left(\mu -S_{vec}\right) \right\Vert^2 + 2\left\Vert \left( {\Pi }_{m}\otimes 
{\Pi }_{m} \right)\left(\mu -S_{vec} \right)\right\Vert\left\Vert \mu  \right\Vert$$
Finally 
\begin{align}
Q2 \leqslant &\mathbb{P} \left(\left\Vert \left( {\Pi }_{m}\otimes 
{\Pi }_{m} \right)\left(\mu -S_{vec}\right) \right\Vert^2 \geqslant  \frac{ \alpha}{4} \mathrm{Tr}\left( \left( {\Pi }_{m}\otimes 
{\Pi }_{m}\right){{\Phi} }\right) \right)\\ \nonumber
+&\mathbb{P} \left(\left\Vert \left( {\Pi }_{m}\otimes 
{\Pi }_{m} \right)\left(\mu -S_{vec} \right)\right\Vert \geqslant  \frac{ \alpha}{8\left\Vert \mu  \right\Vert} \mathrm{Tr}\left( \left( {\Pi }_{m}\otimes 
{\Pi }_{m}\right){{\Phi} }\right) \right)
\end{align}

Now we need to provide an upper bound for the quantities  
$$\mathbb{P} \left(\left\Vert \left( {\Pi }_{m}\otimes 
{\Pi }_{m} \right)\left(\mu -S_{vec}\right) \right\Vert^2 \geqslant t \right).$$
\\

For this we will use the deviation bound provided by Proposition \ref{proBaraud}  stated in subsection \ref{auxlem} .

%

Set $$G_n= \frac{1}{n}\left( \begin{array}{ccc}
Id_{p^2} &  \dots & Id_{p^2} \\
Id_{p^2} &  \dots & Id_{p^2}\\
\vdots & \ddots & \vdots \\
Id_{p^2} &  \dots & Id_{p^2}
\end{array} \right) \in \R^{p^2n\times p^2n}$$
Then 
$$G_n \left(y- f\right) = \mathbf{1_n}\otimes \left(S_{vec}- \mu \right)$$

Now, if $$H_m= Id_n \otimes \left(\Pi_m \otimes \Pi_m\right)=\left( \begin{array}{cccc}
\Pi_m \otimes \Pi_m  & 0 &  \dots & 0 \\
0 & \Pi_m \otimes \Pi_m & \dots & 0\\
\vdots & \vdots & \ddots & \vdots\\
0 & 0 &  \dots & \Pi_m \otimes \Pi_m 
\end{array} \right) \in \R^{p^2n\times p^2n},$$

we have $$H_m\left(\mathbf{1}_n\otimes \left(S_{vec}- \mu \right)\right)= \mathbf{1}_n\otimes\left( \left(\Pi_m \otimes \Pi_m\right) \left(S_{vec}- \mu \right)\right).$$

In conclusion, with $$A_m = H_m G_n = \frac{1}{n}\left( \begin{array}{ccc}
\Pi_m \otimes \Pi_m&  \dots & \Pi_m \otimes \Pi_m \\
\Pi_m \otimes \Pi_m &  \dots & \Pi_m \otimes \Pi_m\\
\vdots & \ddots & \vdots \\
\Pi_m \otimes \Pi_m &  \dots &\Pi_m \otimes \Pi_m
\end{array} \right) \in \R^{p^2n\times p^2n},$$
we have that 
$$A_m \left(y - f\right) = \mathbf{1}_n \otimes \left( \left(\Pi_m \otimes \Pi_m\right) \left(S_{vec}- \mu \right)\right).$$
Moreover, $A_m$ is an orthogonal projection matrix and we have the following equalities  
$$\left\Vert A_m \left(y - f\right) \right\Vert^2 = n \left\Vert\left(\Pi_m \otimes \Pi_m\right) \left(S_{vec}- \mu \right) \right\Vert^2=\left(y-f\right)^\top A_m  \left(y-f \right),$$
$$\mathrm{Tr}\left( A_m \right) = \frac{n}{n}\mathrm{Tr}\left(\Pi_m \otimes \Pi_m \right)= D_m,$$
$$ \mathrm{Tr} \left(A_m \left( Id_{n} \otimes \Phi \right) \right) =  \frac{n}{n}\mathrm{Tr}\left(\left(\Pi_m \otimes \Pi_m \right) \Phi \right) = \mathrm{Tr}\left(\left(\Pi_m \otimes \Pi_m \right) \Phi \right) .$$

Now we can use Proposition \ref{proBaraud} with $\widetilde{A}= A_m$, $\varepsilon_i = y_i-\mu$, $\mathrm{Tr}\left(A_m \right)= D_m$, $\rho \left( A_m \right)= 1$, $\delta^2=\delta_m^2$ and $\beta \geqslant 2$.

This gives for all $x >0$  \\
$\mathbb{P} \left( \left(y-f\right)^\top A_m  \left(y-f \right) \geqslant \mathrm{Tr}\left( \left(\Pi_m \otimes \Pi_m \right)\Phi \right) \left[1 + \sqrt{\frac{x}{D_m}}\right]^2 \right) \leqslant C_2(\beta) \frac{\E \left[ \left\Vert y_1 - \mu \right\Vert^\beta \right]D_m^{\frac{\beta}{2} + 1}}{\mathrm{Tr}\left( \left(\Pi_m \otimes \Pi_m \right)\Phi \right)^\frac{\beta}{2} x^\frac{\beta}{2}},$ \\

that is \\

$\mathbb{P} \left(  \left\Vert\left(\Pi_m \otimes \Pi_m\right) \left(S_{vec}- \mu \right) \right\Vert^2 \geqslant\frac{1}{n} \mathrm{Tr}\left( \left(\Pi_m \otimes \Pi_m \right)\Phi \right) \left[1 + \sqrt{\frac{x}{D_m}}\right]^2 \right) \leqslant C_2(\beta) \frac{\E \left[ \left\Vert y_1 - \mu \right\Vert^\beta \right]D_m^{\frac{\beta}{2} + 1}}{\mathrm{Tr}\left( \left(\Pi_m \otimes \Pi_m \right)\Phi \right)^\frac{\beta}{2} x^\frac{\beta}{2}}$. \vskip .1in
In order to use this deviation bound to  obtain the inequalities 

$$\mathbb{P} \left(\left\Vert \left( {\Pi }_{m}\otimes 
{\Pi }_{m} \right)\left(\mu -S_{vec}\right) \right\Vert^2 \geqslant  \frac{ \alpha}{4} \mathrm{Tr}\left( \left( {\Pi }_{m}\otimes 
{\Pi }_{m}\right){{\Phi} }\right) \right)\leqslant \tilde{C} \frac{1}{n^{\gamma}}$$
and
$$\mathbb{P} \left(\left\Vert \left( {\Pi }_{m}\otimes 
{\Pi }_{m} \right)\left(\mu -S_{vec} \right)\right\Vert \geqslant  \frac{ \alpha}{8\left\Vert \mu  \right\Vert} \mathrm{Tr}\left( \left( {\Pi }_{m}\otimes 
{\Pi }_{m}\right){{\Phi} }\right) \right)\leqslant \tilde{C} \frac{1}{n^{\gamma}}$$
with $\gamma\geqslant \frac{q}{1-2q/\kappa}$,
we need to find $x>0$ satisfying the three following facts
 \begin{equation}
\label{nec1}
\forall m \in \mathcal{M} \quad 
\frac{\alpha}{4}\geqslant \frac{1}{n}\left( 1 + \sqrt{\frac{x}{D_m}}\right)^2
\end{equation}
\begin{equation}
\label{nec2}
\forall m \in \mathcal{M} \quad \left(\frac{\alpha}{ 8\left\Vert \mu  \right\Vert} \right)^2 \mathrm{Tr}\left( \left( {\Pi }_{m}\otimes 
{\Pi }_{m}\right){{\Phi} }\right) \geqslant 
\left(\frac{\alpha}{ 8\left\Vert \mu  \right\Vert} \right)^2C_{inf}\geqslant \frac{1}{n}\left( 1 + \sqrt{\frac{x}{D_m}}\right)^2
\end{equation}
\begin{equation}
 \label{nec3}
\frac{ D_m^{\frac{\beta}{2} + 1}}{\mathrm{Tr}\left( \left(\Pi_m \otimes \Pi_m \right)\Phi \right)^\frac{\beta}{2} x^\frac{\beta}{2}} = \frac{ D_m}{\delta_m^\beta x^\frac{\beta}{2} }\leqslant C\frac{1}{n^{\gamma}}.
 \end{equation}

(\ref{nec1}) and \eqref{nec2} hold for the choice  $x=D_m n^r$ with $r<1$ and if $n$ is large enough to have 
\begin{equation}
\label{nlarge1}
\frac{1}{n} \left(1 + \sqrt{{n^r}}\right)^2\leqslant \frac{\alpha}{4}
\end{equation}
and
\begin{equation}
\label{nlarge2}
\frac{1}{n} \left(1 + \sqrt{{n^r}}\right)^2 \leqslant \left(\frac{\alpha}{ 8\left\Vert \mu  \right\Vert} \right)^2 C_{inf}
\end{equation}
\\


In order to obtain (\ref{nec3}) with $x=D_m n^r$, we use the inequality $D_m \leqslant n$  which gives  
$$ \frac{ D_m}{\delta_m^\beta x^\frac{\beta}{2} } \leqslant \frac{1}{\delta_{m}^\beta D_m^\frac{\beta}{2} n^{r\beta/2-1}}.$$

Moreover  
$$\E \left[ \left\Vert y_1 - \mu \right\Vert^\beta \right] \leqslant \E \left[ \left(\left\Vert y_1\right\Vert +\left\Vert \mu \right\Vert\right)^\beta \right],$$
and by using properties of convexity we obtain 
$$\E \left[ \left\Vert y_1 - \mu \right\Vert^\beta \right] \leqslant 2 ^ {\beta  - 1} \left(\E \left[ \left\Vert y_1 \right\Vert^\beta \right]  + \left\Vert \mu \right\Vert^\beta \right).$$
With the Jensen's inequality we get:
$$\E \left[ \left\Vert y_1 - \mu \right\Vert^\beta \right] \leqslant 2 ^ {\beta } \E \left[ \left\Vert y_1 \right\Vert^\beta \right].$$
In conclusion, with $r= \frac{\frac{\kappa}{2} + 2}{\beta }<1$  we obtain for $n\geqslant n(\kappa,\beta,\alpha,C_{inf},\Sigma)$ 
\begin{equation}
\label{eqA2}
Q2 \leqslant  2 ^ {\beta  + 1}C_2(\beta) \frac{\E \left[ \left\Vert x x^\top  \right\Vert^\beta \right]}{\delta_{m}^\beta D_m^\frac{\beta}{2} }\frac{1}{n^{\kappa/4}}
\end{equation}
where $C_2\left(\beta \right)$ is the constant which appears in Proposition \ref{proBaraud}. 

In conclusion, combining \eqref{eqA1} and \eqref{eqA2}
$$\mathbb{P}\left(\widehat{\delta}_m^2 \leqslant \left(1 - \alpha \right)\delta_m^2 \right) \leqslant \frac{1}{n^{\kappa/4}} \left(
C_2(\beta) 2 ^ {\beta  + 1} +  C_1 \left( \beta \right) \frac{1 }{\alpha^\frac{\beta}{2} }\right)\E \left[ \left\Vert x x^\top  \right\Vert^\beta \right] \delta_m^{-\beta} D_m^{-\frac{\beta}{2}}$$
for $n\geqslant n(\kappa,\beta,\alpha,C_{inf},\Sigma)$ .

To conclude, remark that $\frac{\kappa}{4} \left(1-2q/\kappa\right)=\frac{\kappa-2q}{4} > \frac{2+2q}{4} \geqslant q$ as $q\leqslant 1$.

\end{proof}

\begin{proof}{\textbf{Lemma~\ref{evident}}}\\
Recall that $\Pi_m \otimes \Pi_m$ is an orthogonal projection matrix. Hence there exists an orthogonal matrix $P_m$ such that $P_m ^{\top}\left( \Pi_m \otimes \Pi_m\right) P_m = D$, with $D$ a diagonal matrix with $D_{ii}= 1$ if $i \leqslant D_m$, and $D_{ii } = 0$ otherwise. Then if $\Psi$ is symmetric non-negative definite we have :
$$\mathrm{Tr}\left( \left( {\Pi }_{m}\otimes 
{\Pi }_{m}\right) \Psi \right) = \mathrm{Tr}\left(D P_m^{\top}\Psi P_m \right)$$
$$ =\sum_{l=1}^{p^2}\sum_{k=1}^{p^2}D_{kl} \left( P_m^{\top}\Psi P_m  \right)_{kl}=\sum_{l=1}^{p^2}D_{ll} \left( P_m^{\top}\Psi P_m  \right)_{ll}$$
$$=\sum_{l=1}^{D_m}\left( P_m^{\top}\Psi P_m  \right)_{ll} \in \left[ 0; \mathrm{Tr}\left(\Psi \right)\right].$$
Indeed, $P_m^{\top}\Psi P_m $ is non-negative definite so all its diagonal entries are non-negative.
\end{proof}

\bibliographystyle{abbrv}
\bibliography{bibli_chili}

\end{document}